\numberwithin{equation}{section}
\def\Re{{\sf Re}\,}
\newcommand{\D}{\mathbb D}
\newcommand{\R}{\mathbb R}
\newcommand{\C}{\mathbb C}
\newcommand{\B}{\mathbb B}
\newcommand{\N}{\mathbb N}
\def\Re{{\sf Re}\,}
\def\Re{{\sf Re}\,}
\def\re{\operatorname{Re}}
\def\Re{{\sf Re}\,}
\def\1#1{\overline{#1}}
\def\2#1{\widetilde{#1}}
\def\3#1{\widehat{#1}}
\def\4#1{\mathbb{#1}}
\def\5#1{\frak{#1}}
\def\6#1{{\mathcal{#1}}}
\def\Re{{\sf Re}\,}
\newcommand{\mcite}[1]{\csname b@#1\endcsname}
\theoremstyle{theorem}
\def\Re{{\sf Re}\,}
\newtheorem{theorem}{Theorem}[section]
\newtheorem{lemma}[theorem]{Lemma}
\newtheorem{proposition}[theorem]{Proposition}
\theoremstyle{definition}
\newtheorem{example}[theorem]{Example}
\theoremstyle{remark}
\newtheorem{remark}[theorem]{Remark}
\numberwithin{equation}{section}
\title[geodesic rigidity]{Holomorphic maps acting as Kobayashi isometries on a family of geodesics}
\author[F. Bracci]{Filippo Bracci$^\dag$}
\address{F. Bracci: Dipartimento di Matematica, Universit\`a di Roma ``Tor Vergata", Via della Ricerca
	Scientifica 1, 00133, Roma, Italia.} \email{fbracci@mat.uniroma2.it}
\author[\L. Kosi\'nski]{\L ukasz Kosi\'nski$^{\dag\dag}$}
\address{\L. Kosi\'nski: Institute of Mathematics, Faculty of Mathematics and Computer Science, Jagiellonian University, \L ojasiewicza 6, 30-348 Krak\'ow, Poland} \email{lukasz.kosinski@uj.edu.pl}
\author[W. Zwonek]{W\l odzimierz Zwonek$^{\dag\dag\dag}$}
\address{W. Zwonek: Institute of Mathematics, Faculty of Mathematics and Computer Science, Jagiellonian University, \L ojasiewicza 6, 30-348 Krak\'ow, Poland} \email{wlodzimierz.zwonek@uj.edu.pl}
\subjclass[2010]{Primary 32A19; Secondary 32H12}
\keywords{Rigidity of holomorphic maps; invariant metrics; scaling methods}
\thanks{$^\dag$Partially supported by  PRIN {\sl  Real and Complex Manifolds: Geometry and Holomorphic Dynamics} n. 2022AP8HZ9, by INdAM, and by the   MUR Excellence Department Project MatMod@TOV
CUP:E83C23000330006}
\thanks{$^{\dag\dag}$ Partially supported by Sheng grant no. 2023/48/Q/ST1/00048 of the National Science Center, Poland}
\thanks{$^{\dag\dag\dag}$ Partially supported by the Preludium bis grant no. 2021/43/O/ST1/02111 of the National Science Centre, Poland}
\long\def\REM#1{\relax}
\begin{document}
	\maketitle

	\selectlanguage{english}
	\begin{abstract}
		Consider a holomorphic map $F: D \to G$ between two domains in ${\mathbb C}^N$. Let $\mathcal F$ denote a family of geodesics for the Kobayashi distance, such that $F$ acts as an isometry on each element of $\mathcal F$. This paper is dedicated to characterizing the scenarios in which the aforementioned condition implies that $F$ is a biholomorphism. Specifically, we establish this when $D$ is a complete hyperbolic domain, and $\mathcal F$ comprises all geodesic segments originating from a specific point. Another case is when $D$ and $G$ are $C^{2+\alpha}$-smooth bounded pseudoconvex domains, and $\mathcal F$ consists of all geodesic rays converging at a designated boundary point of $D$. Furthermore, we provide examples to demonstrate that these assumptions are essentially optimal.	\end{abstract}
	
	\section{Introduction}
Understanding the conditions under which a holomorphic map between two complex manifolds becomes a biholomorphism has been a longstanding and intriguing question in the literature. The vastness of the literature makes it challenging to provide a comprehensive overview. Notably, classical results such as the Schwarz lemma, along with its generalizations by Carath\'eodory, Cartan, Kaup, and Wu, offer examples of simple conditions that distinguish biholomorphisms among holomorphic maps (see, e.g., \cite{A2}). These theorems also have natural counterparts in the context of invariant metrics and (pluri)complex Green and Poisson functions (\cite{A2, BCD, BP, BDK, Bra-Kra-Rot 2020}).	

When dealing with bounded domains in $\mathbb{C}^N$, such results extend to intriguing natural boundary counterparts. Julia's lemma, rooted in versions of the maximum principle for (sub)harmonic functions, and its various generalizations (see, e.g., \cite{BCD, A2}) offer geometric conditions. To some extent, these conditions have been generalized in different ways to higher dimensions (see \cite{Ru}).

In our previous work \cite{BKZ}, we delved into the question of when, given two convex domains and a family of complex geodesics, a holomorphic map that is isometric with respect to the Kobayashi distance for any complex geodesic in the given family is indeed an automorphism. This question is meaningful specifically within the class of convex domains, as it remains unclear outside this class whether a domain possesses a sufficient number of complex geodesics.

Nevertheless, in a complete Kobayashi hyperbolic domain, every two points can be connected by a geodesic ({\sl i.e.}, a length-minimizing curve), and biholomorphisms naturally preserve these geodesics, in the sense that, if $\gamma:I \to M$ is a  geodesic in $M$, and $F:M\to N$ is a biholomorphism, then $F\circ \gamma:I\to N$ is a  geodesic in $N$. In particular, 
	\begin{equation}\label{eq:pres-geo-def}
	K_M(\gamma(t),\gamma(s))=K_N(F(\gamma(t)),F(\gamma(s))), \quad \hbox{ for all } t,s\in I,
	\end{equation}
where, for a complex manifold $X$,  $K_X$ denotes its Kobayashi distance.

Hence, it is pertinent to inquire about the extent to which \eqref{eq:pres-geo-def} characterizes biholomorphisms for a given family of geodesics.

The response proves to be remarkably broad, particularly when one examines the family comprising all geodesic segments emanating from a specific point:

\begin{theorem}\label{main-intro-inner}
		Let $D\subset \C^N$, $N\geq 1$, be a complete hyperbolic domain  and let $G$ be a domain in $\C^N$. Let  $p\in D$ and let $\mathcal F$ be the family of all geodesic segments in $D$  starting from $p$. Let $F:D\to G$ be holomorphic. Then $F$ is a biholomorphism if and only if  $F\circ \gamma$ is a  geodesic of $G$ for all $\gamma\in\mathcal F$.
	\end{theorem}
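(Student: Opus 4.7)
The ``only if'' direction is immediate, since a biholomorphism preserves the Kobayashi distance and therefore sends geodesics to geodesics. For the converse, the plan is to argue in three stages: (i) show that $F$ is Kobayashi distance preserving \emph{from} $p$; (ii) use this to prove that $dF(p)$ is an isometry of the infinitesimal Kobayashi pseudometrics and in particular a linear isomorphism; (iii) combine local biholomorphicity at $p$ with the completeness of $D$ to upgrade $F$ to a global biholomorphism onto $G$.

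Stage (i) is essentially built into the hypothesis: by completeness every $q\in D$ is connected to $p$ by some geodesic $\gamma\in\mathcal F$, and since $F\circ\gamma$ is a geodesic of $G$ one reads off $K_G(F(p),F(q))=K_D(p,q)$ for every $q\in D$.

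For stage (ii), fix $v\in\C^N$ and apply (i) to $q_\varepsilon:=p+\varepsilon v$. Using the standard Royden identity $\lim_{\varepsilon\to 0^+}K_D(p,p+\varepsilon v)/\varepsilon=k_D(p,v)$ (and its analogue at $F(p)$ together with the Taylor expansion $F(p+\varepsilon v)=F(p)+\varepsilon\,dF(p)v+O(\varepsilon^2)$), one obtains $k_G(F(p),dF(p)v)=k_D(p,v)$ for every $v\in\C^N$, where $k_D,k_G$ denote the infinitesimal Kobayashi pseudometrics. Since $D$ is hyperbolic, $k_D(p,\cdot)$ is a non-degenerate norm on $\C^N$, so $\ker dF(p)=\{0\}$ and $dF(p)$ is a linear isomorphism; by the inverse function theorem $F$ is a local biholomorphism at $p$.

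For stage (iii), completeness of $D$ together with (i) implies that $F\colon D\to F(D)$ is proper: if $K\subset G$ is compact, then $K\subset B_G(F(p),R)$ for some $R$, whence $F^{-1}(K)\subset B_D(p,R)$, which is relatively compact in $D$ by completeness. A proper holomorphic map between $N$-dimensional complex manifolds of maximal rank at some point is a finite branched covering onto its image. The fiber over $F(p)$ reduces to $\{p\}$, for any other preimage $q$ would give $K_D(p,q)=K_G(F(p),F(p))=0$ and hence $q=p$ by hyperbolicity; combined with the local multiplicity of $F$ at $p$ being $1$, this forces the covering degree to equal one, so $F$ is an unramified biholomorphism onto $F(D)$. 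Finally, $F(D)$ is both open (invariance of domain, as $F$ is now locally biholomorphic everywhere) and closed (by properness) in the connected domain $G$, so $F(D)=G$. The principal technical hurdle is stage (ii): one must justify that $\lim_\varepsilon K_D(p,p+\varepsilon v)/\varepsilon$ indeed coincides with $k_D(p,v)$ in the possibly non-smooth setting of a general complete hyperbolic domain. This is a classical fact of Royden, but it is the hyperbolicity of $D$ that produces the non-degeneracy needed to deduce invertibility of $dF(p)$.
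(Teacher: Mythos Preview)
Your proof follows the same three-step strategy as the paper's: establish $K_G(F(p),F(q))=K_D(p,q)$ for all $q$, deduce invertibility of $dF_p$, then combine properness (via completeness of $D$) with the fact that $F^{-1}(F(p))=\{p\}$ and local multiplicity one to conclude that $F$ has global degree one. The packaging of stage~(iii) differs only cosmetically (you phrase it via branched coverings, the paper via the topological degree/argument principle).

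The one substantive difference is your stage~(ii). The paper does not invoke the limit $\lim_{\varepsilon\to 0^+}K_D(p,p+\varepsilon v)/\varepsilon=k_D(p,v)$; it argues directly by contradiction: if $dF_pX=0$ with $X\neq 0$, then $F(p+tX)=F(p)+O(t^2)$, hence $K_G(F(p),F(p+tX))\leq C\,t^2$, while hyperbolicity of $D$ gives $K_D(p,p+tX)\geq c\,t$, contradicting~(i). This is worth noting because the ``Royden identity'' you cite is not quite a theorem of Royden in that form: Royden's result says $K_D$ is the integrated distance of $k_D$, whereas the metric derivative of $K_D$ is in general the Kobayashi--Busemann pseudometric $\hat k_D\leq k_D$, and applying it on the $G$-side (where no hyperbolicity is assumed) is delicate. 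Since only the crude bounds $K_D(p,p+tX)\gtrsim t$ and $K_G(F(p),F(p)+w)\lesssim\|w\|$ are actually needed, the paper's elementary argument sidesteps the regularity issue you yourself flag.
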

	
When addressing the same question in the context of the boundary, caution must be exercised in selecting the framework, as geodesics may exhibit erratic behavior near the boundary. In situations where geodesic rays consistently "land" at a specific point (as observed in $C^2$ strongly pseudoconvex domains), it is meaningful to investigate the persistence of the previous result. Indeed, we can establish:

	\begin{theorem}\label{main-intro}
		Let $D, G\subset \C^N$, $N\geq 1$, be $C^{2+\alpha}$-smooth bounded strongly pseudoconvex domains, $\alpha>0$, and let $p\in \partial{D}$. Let $\mathcal F$ be the family of all geodesic rays $\gamma:[0,+\infty)\to D$ such that $\lim_{t\to+\infty}\gamma(t)=p$. Let $F:D\to G$ be holomorphic. Then $F$ is a biholomorphism if and only if  $F\circ \gamma$ is a  geodesic of $G$ for all $\gamma\in\mathcal F$.
	\end{theorem}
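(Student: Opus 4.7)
The ``only if'' direction is immediate, since biholomorphisms are Kobayashi isometries and so send every Kobayashi geodesic to a Kobayashi geodesic. For the converse, the plan is to use the scaling method to reduce to a statement on the unit ball $\B^N$.

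I would first show that there is a single boundary point $q\in\partial G$ such that $\lim_{t\to+\infty}(F\circ\gamma)(t)=q$ for every $\gamma\in\mathcal F$. Because $K_G((F\circ\gamma)(0),(F\circ\gamma)(t))=K_D(\gamma(0),\gamma(t))\to+\infty$, the ray $F\circ\gamma$ escapes every compact subset of $G$; since $F\circ\gamma$ is a geodesic ray of the Gromov-hyperbolic space $(G,K_G)$ (Balogh--Bonk), it converges to a single boundary point $q_\gamma\in\partial G$. To see that $q_\gamma$ is independent of $\gamma$, I would use that any two rays $\gamma_1,\gamma_2\in\mathcal F$ are asymptotic in $(D,K_D)$ (both represent the ideal point $p$); combined with the isometry of $F$ on each $\gamma_i$ and its global $1$-Lipschitz property, a short Gromov-product computation shows that $F\circ\gamma_1,F\circ\gamma_2$ are asymptotic in $(G,K_G)$ and so converge to a common $q\in\partial G$.

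Next I would apply Pinchuk-style scaling. Fixing a reference ray $\gamma_0\in\mathcal F$, set $p_\nu:=\gamma_0(\nu)$ and $q_\nu:=(F\circ\gamma_0)(\nu)$ and choose affine maps $T^D_\nu$ so that $T^D_\nu(D)\to\B^N$ with $p_\nu$ mapped to a fixed interior point, and analogously $T^G_\nu$ at $q_\nu$. The rescaled maps $F_\nu:=T^G_\nu\circ F\circ(T^D_\nu)^{-1}$ are locally uniformly bounded and, on a subsequence, converge to a holomorphic $\tilde F\colon\B^N\to\overline{\B^N}$. The isometry along $\gamma_0$ passes to the limit, so $\tilde F$ is nonconstant and maps $\B^N$ into $\B^N$. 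Using the boundary behavior of Kobayashi geodesics on $C^{2+\alpha}$-smooth strongly pseudoconvex domains, every geodesic ray of $\B^N$ landing at the scaling image $e_1$ of $p$ arises as a scaling limit of rays in $\mathcal F$, so $\tilde F$ is a Kobayashi isometry on every such ray.

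Now in the ball, each complex line $L\subset\C^N$ through $e_1$ transverse to the complex tangent hyperplane at $e_1$ meets $\B^N$ in a complex geodesic containing a real geodesic ray landing at $e_1$; the restriction $\tilde F|_{L\cap\B^N}$ is holomorphic into $\B^N$ and isometric on this real ray, so by the rigidity case of the Schwarz--Pick lemma it is a complex geodesic of $\B^N$, with image inside a complex line $L'$ also passing through $e_1$. Hence $\tilde F$ sends complex lines through $e_1$ to complex lines through $e_1$, acting on each as a M\"obius biholomorphism fixing the boundary point, and this rigid structure forces $\tilde F$ to be an automorphism of $\B^N$ (a parabolic element fixing $e_1$). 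Translating back, $\tilde F$ being an automorphism implies that $F$ has non-degenerate non-tangential boundary derivative at $p$ with unit dilation coefficient, and a boundary rigidity result for $C^{2+\alpha}$-smooth strongly pseudoconvex domains (of Julia--Wolff--Carath\'eodory type, together with Alexander--Pinchuk-style propriety arguments) then forces $F$ to be a biholomorphism $D\to G$. The main obstacles I anticipate are (i) verifying that every geodesic ray of $\B^N$ landing at $e_1$ is indeed realised as a scaling limit of rays in $\mathcal F$, which requires uniform boundary estimates for Kobayashi geodesics leveraging the $C^{2+\alpha}$ regularity, and (ii) the final transfer from ``$\tilde F$ is an automorphism'' to ``$F$ is a biholomorphism'', which depends on a delicate boundary Schwarz-type lemma.
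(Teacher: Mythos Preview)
Your Step~1 (common landing point $q\in\partial G$) and the scaling set-up leading to a limit map $\tilde F:\B^N\to\B^N$ that acts as a Kobayashi isometry on every geodesic ray landing at $e_1$ are essentially the paper's Step~1 together with its Lemma~\ref{Lem:ball}, and your sketch of why $\tilde F$ must be an automorphism is close to the paper's (the paper fills the small gap by projecting to the first coordinate, applying Schwarz--Pick there, and then using the maximum principle to kill the remaining components).

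The genuine gap is your endgame. You want to pass from ``the scaling limit $\tilde F$ is an automorphism of $\B^N$'' to ``$F$ is a biholomorphism'' via a boundary Julia--Wolff--Carath\'eodory/Burns--Krantz type rigidity statement. No such theorem is available in the generality you need: knowing only that $F$ has unit boundary dilation at $p$ (which is what the automorphism limit encodes) does \emph{not} force $F$ to be a biholomorphism between two different $C^{2+\alpha}$ strongly pseudoconvex domains. Burns--Krantz requires a third-order tangency to the identity, and Alexander--Pinchuk arguments presuppose properness, which is precisely what you have not yet established. Your obstacle~(ii) is therefore not a technicality but the heart of the matter.

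The paper circumvents this entirely by using the scaling-to-automorphism mechanism \emph{inside} two separate contradiction arguments rather than once and for all. For properness (Step~2), one assumes a bad sequence $z_k\to p$ with $F(z_k)$ relatively compact, rescales along scales dictated by $z_k$, and obtains in the limit \emph{two distinct} geodesic lines of $\B^N$ (one coming from a fixed reference ray, one coming from the rays through $z_k$) that the limit automorphism $\Phi$ would have to send to the \emph{same} line $(-1,1)e_1$ --- a contradiction. For injectivity (Step~3), one assumes two preimages of a generic point, rescales along the associated rays, and finds two distinct nearby points with the same image under $F_k$ while $F_k\to\Phi$ is locally injective --- again a contradiction. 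So the scaling limit is a \emph{tool to manufacture contradictions}, not a bridge back to $F$.

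Your obstacle~(i) is real but tractable: the paper handles it as Proposition~\ref{Prop:persistence-of-geodesic}, using uniform Kobayashi-distance convergence (Lemma~\ref{Lem:metric-conv}) and a visibility lemma rather than pointwise boundary regularity of geodesics.
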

	
The key characteristic of the family $\mathcal F$ of geodesics examined in the preceding theorems is that, for any $z\in D$, there exists a geodesic in the family containing it in its image. A family of geodesics possessing this property is termed {\sl complete}.

Hence, a natural question arises: Do the earlier results apply universally to every complete family of geodesics? The response is in the negative, and our assumptions are found to be fundamentally optimal. We will demonstrate examples of such scenarios in Section~\ref{section:special-cases}.

	\section{Geodesics in strongly pseudoconvex domains}\label{Sec:geo-pseudo}

Let $M$ be a complex manifold and let $K_M$ denote its {\sl Kobayashi pseudodistance} and let $k_M$ be its {\sl infinitesimal Kobayashi pseudometric}. Let $I\subset \R$ be an interval. An absolutely continuous curve $\gamma:I\to M$ is a {\sl geodesic} for $k_M$ if for every $s, t\in I$, $s\leq t$, 
	\[
	K_M(\gamma(s),\gamma(t))=\ell_M(\gamma;[s,t]):=\int_s^t k_M(\gamma(u);\gamma'(u))du.
	\]
	Any geodesic $\gamma$ can be reparametrized in (hyperbolic) arc-length, so that $K_M(\gamma(s), \gamma(t))=|t-s|$. If $\gamma$ is parametrized by arc-length and its interval of definition $I$ is a closed interval, then $\gamma$ is called a {\sl geodesic segment}. If $I=\R^+$, then $\gamma$ is a {\sl geodesic ray}, while, if $I=\R$, then $\gamma$ is a {\sl geodesic line}. We just say that $\gamma$ is a geodesic when there is no need to specify its domain of definition. 	
	If $M$ is complete hyperbolic, by Hopf-Rinow's Theorem, given any two points $p, q\in M$ there always exists a geodesic segment $\gamma: [a,b]\to M$ which joins $p$ and $q$, namely, such that $\gamma(a)=p$ and $\gamma(b)=q$.

 If $D\subset \C^N$, we say that a geodesic ray $\gamma:[0,+\infty)\to D$  lands at some point $p\in\partial D$ provided $\lim_{t\to+\infty}\gamma(t)=p$.

We collect below the properties we need for geodesics in strongly pseudoconvex domains. These are well known, and we only sketch the proof.

	\begin{proposition}\label{Prop:geo-in-sp}
		Let $D\subset \C^N$, $N\geq 2$, be a bounded strongly pseudoconvex domain with $C^{2}$ boundary.  Then:
		\begin{enumerate}
			\item  for every two points $p,q\in \overline{D}$  exists a geodesic segment, ray or line $\gamma: I \to D$ such that $p,q\in \overline{\gamma(I)}$.			\item Let $p\in \overline{D}$ and $\{z_n\}\subset D$ be a sequence converging to $q\in\partial D\setminus\{p\}$. If $\{\gamma_n\}$ is a sequence of geodesic segments/rays joining $p$ with $z_n$, then, up to affine reparametrizations, $\{\gamma_n\}$ converges uniformly  to a geodesic ray/line joining $p$ with $q$.
			\item if $\gamma:[0,+\infty)\to D$ is a geodesic ray, then there exists $p\in \partial D$ such that  $\lim_{t\to+\infty}\gamma(t)=p$. 
			\item If $\gamma, \eta$ are two geodesic rays in $D$ converging to $p\in \partial D$ then there exists $C>0$ such that $\lim_{t\to+\infty}K_D(\gamma(t), \eta(t))\leq C$.
		\end{enumerate}
	\end{proposition}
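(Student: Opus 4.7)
The plan is to derive the four assertions from two well-known deep results about $C^2$-smooth bounded strongly pseudoconvex domains: the Forstneric--Rosay boundary estimates for $K_D$ (which in particular yield completeness of $K_D$ and a lower bound of the form $K_D(z_0,z)\ge \tfrac12|\log\delta_D(z)|-C$ near $\partial D$, where $\delta_D$ is the Euclidean distance to the boundary), and the Balogh--Bonk theorem that $(D,K_D)$ is a proper geodesic Gromov hyperbolic metric space whose Gromov boundary is canonically homeomorphic to $\partial D$.

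I would begin with (2), which is the technical heart. Parametrize each $\gamma_n$ by Kobayashi arc length. When $p\in D$, fix $\gamma_n(0)=p$; when $p\in\partial D$, the boundary estimate guarantees the existence of times $t_n$ for which $\gamma_n(t_n)$ converges to some interior basepoint, and we affinely reparametrize by this shift. On every compact subinterval $[0,T]$ the image $\gamma_n([0,T])$ lies in the closed Kobayashi ball of radius $T$ about the basepoint, which is relatively compact in $D$ by completeness; on such a compact subset $K_D$ is bi-Lipschitz equivalent to the Euclidean distance, so the family is uniformly Euclidean-equicontinuous. Arzel\`a--Ascoli together with a diagonal argument produces a subsequential limit $\gamma$, and the isometry identity $K_D(\gamma_n(s),\gamma_n(t))=|t-s|$ passes to the limit by continuity of $K_D$, so $\gamma$ is a geodesic. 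The sensitive point is that the limit actually reaches $q$: one uses the $\tfrac12|\log\delta_D|$-lower bound to show that any geodesic segment approaching $z_n$ must spend its final sub-arc arbitrarily close to $q$, forcing the limit ray (or line) to land at $q$.

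Part (1) then follows by iterating (2): for two interior points, apply Hopf--Rinow in Venturini's form for complete hyperbolic manifolds; for $p\in D$ and $q\in\partial D$, approximate $q$ from inside and apply (2) to obtain a geodesic ray; if both points lie on $\partial D$, apply (2) twice, letting one endpoint and then the other approach the boundary. For (3) and (4) I would invoke Balogh--Bonk: in a proper Gromov hyperbolic geodesic space every geodesic ray converges to a unique point of the Gromov boundary, which gives (3) after identifying this boundary with $\partial D$; two geodesic rays converging to the same point at infinity are asymptotic in the Gromov sense, so their Kobayashi distance remains uniformly bounded, giving (4). The main obstacle in the whole proof is the compactness step in (2) combined with the delicate boundary-landing claim; once this is established, the other statements are essentially formal consequences of the general theory of Gromov hyperbolic spaces.
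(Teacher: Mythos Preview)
Your strategy is sound and relies on the same core ingredient as the paper (the Balogh--Bonk theorem), but the execution differs. The paper's proof is much shorter: it observes that by Balogh--Bonk $D$ is a \emph{Gromov model domain} in the sense of Bracci--Gaussier, hence \emph{visible}, and then (1), (2), (3) are direct consequences of visibility (with a citation to \cite[Lemma~3.1]{Bra} for the uniform convergence in (2)); for (4) it invokes the absence of geodesic loops and \cite[Prop.~4.4]{Bra}. In other words, the paper outsources all the Arzel\`a--Ascoli and landing work to the visibility framework, whereas you redo part (2) by hand and then deduce (1) by approximation.

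One point in your argument for (2) is underspecified. You write that the lower bound $K_D(z_0,z)\ge \tfrac12|\log\delta_D(z)|-C$ forces the final sub-arc of $\gamma_n$ to lie near $q$. The lower bound alone only tells you that the tail of $\gamma_n$ is close to $\partial D$; to pin it near the specific point $q$ you also need the \emph{upper} Forstneri\v{c}--Rosay estimate (or, equivalently, the visibility property), since otherwise the tail could wander along $\partial D$ before reaching $z_n$. Similarly, in the case $p\in\partial D$, the existence of times $t_n$ with $\gamma_n(t_n)$ relatively compact in $D$ is exactly a visibility statement and does not follow from the one-sided lower bound. Since you already invoke Balogh--Bonk for (3) and (4), the cleanest fix is to use it here too: Gromov hyperbolicity together with the identification of the Gromov boundary with $\partial D$ gives visibility, and then your Arzel\`a--Ascoli scheme goes through without further difficulty. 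With that adjustment your proof is correct; the paper's route simply packages these steps more economically by citing the visibility literature directly.
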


\begin{proof}
By \cite{BaBo}, $D$ is Gromov hyperbolic with respect to the Kobayashi distance and the identity map extends as a homeomorphism from the Gromov closure of $D$ and $\overline{D}$, that is, $D$ is a so-called {\sl Gromov model domain} (see \cite{BG}). In particular it is ``visible'' hence (1), (2) and (3) follows (for the uniform convergence in (2) see  \cite[Lemma~3.1]{Bra}).

In order to prove (4),  let $\gamma, \gamma_0$ be two geodesic rays converging to $p\in\partial D$. Since $D$ is a Gromov model domain, it has no ``geodesic loops'' and hence by \cite[Prop. 4.4]{Bra} there exists $C>0$ such that
\[
K_D(\gamma(t), \gamma_0(t))\leq C, \quad \forall t\geq 0,
\]
and we are done.
\end{proof}
	
	\subsection{Application of the scaling method}\label{Sec:scaling}
	
	Let $\Omega$ be a bounded strongly pseudoconvex domain with $C^{2+\alpha}$-smooth boundary, $\alpha\in (0,1]$, and let $p\in \partial \Omega$. 
	
	It follows from Lempert~\cite{Lem 1981} and Fridman-Ma \cite{Fridman} (see also \cite[Section~2]{Kos-Nik-Tho 2023} for details) that there exists a mapping $T$ biholomorphic on $\overline \Omega$ sending $p$ to $e_1=(1,0, \ldots, 0)$ such that $\Omega_0:=T(\Omega)$ is contained in the ball of radius $1+\delta_0$ and centered at $(-\delta_0,0)$, where $\delta_0\geq 0$, and $\partial \Omega_0$ has  a defining function near $e_1$ of the form
	\begin{equation}\label{eq:def}
	\rho(z) = -1 +|z|^2+ o(|z-e_1|^{2+\alpha}).
	\end{equation}
	Writing $z=(z_1,z'')\in \C\times\C^{N-1}$, define
	\begin{equation}\label{eq:aut}A_t(z) = \left(\frac{z_1+ t}{1+ tz_1}, \sqrt{1-t^2} \frac{z'}{1 + tz_1}\right),
	\end{equation} and 
	\begin{equation}\label{eq:Dt} \Omega_t:=A_t^{-1}(\Omega_0),\quad t\in (0,1).
	\end{equation}
	Then, 
	\begin{lemma}\label{Lem:scaling}
		\begin{enumerate}
			\item for every $\beta>-1$, $\Omega_t\cap \{\re z_1>\beta\}$ converges as $t\to 1$ to $\mathbb B^N \cap \{\re z_1>\beta\}$ in $\mathcal C^{2+\alpha}$-topology,
			\item $\Omega_t$ converges to $\B^N$ in the Hausdorff topology as $t\to 1$.
		\end{enumerate}
	\end{lemma}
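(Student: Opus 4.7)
The plan is to work with explicit scaled defining functions. Since $A_t \in \Aut(\B^N)$ fixes $e_1$, the standard identity $|A_t(z)|^2 - 1 = (1-t^2)(|z|^2-1)/|1+tz_1|^2$ holds. Writing $\rho = (|z|^2-1) + \psi$ with $\psi \in C^{2+\alpha}$ near $e_1$ vanishing to order $2+\alpha$ by \eqref{eq:def}, this produces
\[
\tilde\rho_t(z) := \frac{|1+tz_1|^2}{1-t^2}\,\rho(A_t(z)) = (|z|^2-1) + E_t(z), \qquad E_t(z) := \frac{|1+tz_1|^2}{1-t^2}\,\psi(A_t(z)),
\]
which is a local defining function for $\Omega_t$ near $e_1$. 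Both parts of the lemma reduce to showing $E_t \to 0$ in $C^{2+\alpha}_{\mathrm{loc}}(\{\re z_1 > -1\})$ and supplementing this with a uniform outer bound on $\Omega_t$.

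On $K_{\beta,R} := \{|z|\leq R,\, \re z_1 \geq \beta\}$ with $\beta > -1$ and $R > 0$, one has $|1+tz_1| \geq \re(1+tz_1) \geq 1+t\beta$, uniformly bounded below by a positive constant. The explicit formula
\[
A_t(z) - e_1 = \Bigl(\tfrac{(z_1-1)(1-t)}{1+tz_1},\, \tfrac{\sqrt{1-t^2}\,z'}{1+tz_1}\Bigr)
\]
gives $|A_t(z)-e_1| \leq C\sqrt{1-t}$ uniformly on $K_{\beta,R}$, so $A_t(K_{\beta,R})$ lies in the neighborhood of $e_1$ where \eqref{eq:def} is valid once $t$ is close to $1$. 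By the chain and product rules, every term in the derivatives of $E_t$ up to order $2$ pairs one factor of order $(1-t)^{-1}$ (from $|1+tz_1|^2/(1-t^2)$) with a $k$-th derivative of $\psi \circ A_t$ of order $(1-t)^{(2+\alpha-k)/2}$, giving $o((1-t)^{\alpha/2})$. The same bookkeeping yields $[D^2 E_t]_{C^\alpha(K_{\beta,R})} \to 0$, provided the ``$o$'' in \eqref{eq:def} is read in its refined $C^{2+\alpha}$ sense, i.e., the $\alpha$-Hölder seminorm of $D^2 \psi$ on $B(e_1, r)$ vanishes as $r \to 0$. Since $\nabla \rho_B \neq 0$ on $\partial \B^N$, the $C^{2+\alpha}$ convergence $\tilde\rho_t \to \rho_B$ translates, via the implicit function theorem, into the $C^{2+\alpha}$ convergence of domains asserted in (1).

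For (2), combine (1) with a uniform outer bound. The inclusion $\Omega_t \subset A_t^{-1}(B((-\delta_0,0),1+\delta_0))$, together with the analogous M\"obius computation for the enclosing ball, shows that this outer bound is uniformly bounded and Hausdorff-converges to the ellipsoid $\mathcal{E} := \{|z_1|^2 + |z'|^2/(1+\delta_0) \leq 1\}$. Any subsequential limit $z_\infty$ of a sequence $z_{t_n} \in \Omega_{t_n}$ therefore lies in $\mathcal{E}$, so $|z_{\infty,1}| \leq 1$. If $z_{\infty,1} = -1$, the ellipsoid bound forces $z_\infty = -e_1 \in \partial\B^N$; otherwise $\re z_{\infty,1} > -1$ and $z_{t_n} \in K_{\beta,R}$ eventually for appropriate $\beta > -1$ and $R > 0$, and passing to the limit in $\tilde\rho_{t_n}(z_{t_n}) < 0$ using the uniform convergence $\tilde\rho_t \to \rho_B$ on $K_{\beta,R}$ forces $|z_\infty|^2 \leq 1$. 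In either case $z_\infty \in \overline{\B^N}$. The reverse inclusion follows since each $w \in \B^N \setminus \{-e_1\}$ eventually lies in $\Omega_t$ by evaluating $\tilde\rho_t$ at $w$, while $-e_1 = \lim_{t \to 1} A_t^{-1}(w_0)$ for any fixed $w_0 \in \Omega_0 \setminus \{e_1\}$. The main obstacle is the $C^{2+\alpha}$ step itself---specifically the $\alpha$-Hölder control on $D^2 E_t$---which hinges on the refined $C^{2+\alpha}$ reading of the ``$o$'' in \eqref{eq:def} rather than its mere pointwise meaning.
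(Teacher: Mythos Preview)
The paper does not actually prove this lemma: it is stated as a direct consequence of the scaling set-up, with the details deferred to the references \cite{Lem 1981}, \cite{Fridman}, and especially \cite[Section~2]{Kos-Nik-Tho 2023}. Your write-up therefore supplies an explicit argument where the paper offers none, and your approach via the rescaled defining function $\tilde\rho_t=(|z|^2-1)+E_t$ is the standard and correct one. The M\"obius identity, the estimate $|A_t(z)-e_1|\le C\sqrt{1-t}$ on $K_{\beta,R}$, and the ellipsoidal outer bound for part~(2) are all verified without difficulty.

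Your closing caveat, however, is unnecessary: the ``refined $C^{2+\alpha}$ reading'' of the little-$o$ in \eqref{eq:def} is not needed for the H\"older step. What \eqref{eq:def} gives (together with $\rho\in C^{2+\alpha}$) is that $\psi$ has vanishing $2$-jet at $e_1$, hence $|D^2\psi(w)|\le [D^2\psi]_{C^\alpha}\,|w-e_1|^\alpha$. The point you are missing is that the composition with $A_t$ already produces the decay of the H\"older seminorm: since $\|DA_t\|_{L^\infty(K_{\beta,R})}=O(\sqrt{1-t})$, one has
\[
|D^2\psi(A_t(z))-D^2\psi(A_t(z'))|\le [D^2\psi]_{C^\alpha}\,|A_t(z)-A_t(z')|^\alpha\le C\,(1-t)^{\alpha/2}\,|z-z'|^\alpha,
\]
so $[D^2\psi\circ A_t]_{C^\alpha(K_{\beta,R})}=O((1-t)^{\alpha/2})$ with the \emph{fixed} global H\"older constant of $D^2\psi$, not a local one that must vanish. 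Combining this with $\|DA_t\|_\infty^2=O(1-t)$ and $\|h_t\|_{C^\alpha}=O((1-t)^{-1})$ in the product rule for H\"older norms gives $[D^2E_t]_{C^\alpha}=O((1-t)^{\alpha/2})\to 0$, exactly matching your $C^0$ bookkeeping. So your proof is complete as written; you can simply delete the final caveat.
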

	
	In the sequel, we will need the following results.
	
	\begin{lemma}\label{Lem:metric-conv}
		$K_{\Omega_{t_k}}(\cdot,\cdot)$ converges uniformly on compacta to $K_{\B^N}(\cdot,\cdot)$. 
	\end{lemma}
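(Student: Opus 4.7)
The plan splits the proof into matching upper and lower estimates for $K_{\Omega_{t_k}}$, followed by an upgrade from pointwise to uniform convergence. The essential geometric input is Lemma~\ref{Lem:scaling}(2): for $k$ sufficiently large, $\Omega_{t_k}$ lies inside a fixed bounded neighbourhood of $\overline{\B^N}$ and contains every fixed compact subset of $\B^N$. Convexity of $\B^N$, combined with Lempert's theorem, ensures that the Kobayashi distance on the ball is realised by a single complex geodesic disk, which I exploit throughout.

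For the upper estimate, fix a compact $E\subset\B^N$ and $(z,w)\in E\times E$, and let $\phi\colon\D\to\B^N$ be the complex geodesic with $\phi(0)=z$ and $\phi(s_0)=w$, so $K_{\D}(0,s_0)=K_{\B^N}(z,w)$. For every $r\in(0,1)$, the set $\phi(\overline{\D_r})$ is compact in $\B^N$ and, by Lemma~\ref{Lem:scaling}(2), eventually lies in $\Omega_{t_k}$. The rescaled disk $\zeta\mapsto\phi(r\zeta)$ then gives $K_{\Omega_{t_k}}(z,w)\le K_{\D}(0,s_0/r)\to K_{\B^N}(z,w)$ as $r\to 1$; the explicit form of complex geodesics in the ball and the compactness of $E$ make the bound uniform in $(z,w)$.

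For the lower estimate, I argue by contradiction: suppose $(z_n,w_n)\to(z,w)$ in $\B^N\times\B^N$ and $t_{k_n}\to 1$ with $K_{\Omega_{t_{k_n}}}(z_n,w_n)\to L<K_{\B^N}(z,w)$. Choose near-minimising chains of analytic disks $\psi_{n,j}\colon\D\to\Omega_{t_{k_n}}$ joining $z_n$ to $w_n$, with total hyperbolic contribution at most $L+1/n$. The inclusion $\Omega_{t_{k_n}}\subset W$ in a fixed bounded neighbourhood of $\overline{\B^N}$ renders $\{\psi_{n,j}\}$ a normal family; a diagonal extraction yields a limit chain $\psi_j\colon\D\to\overline{\B^N}$ joining $z$ to $w$. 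The maximum principle forces $\psi_j(\D)\subset\B^N$, since the intermediate nodes of the chain converge to points of $\B^N$ and any non-constant $\psi_j$ with a value on $\partial\B^N$ would have to be of constant modulus $1$. The resulting chain in $\B^N$ delivers $K_{\B^N}(z,w)\le L$, the desired contradiction.

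Uniform convergence on compacta then follows from pointwise convergence together with equi-Lipschitz estimates for $\{K_{\Omega_{t_k}}\}$ on compact subsets of $\B^N$, which come from the infinitesimal version of the upper bound above. The main obstacle I foresee is the lower estimate: the lengths of near-minimising chains may in principle grow with $n$, complicating the diagonal extraction. The cleanest way around this, and the approach I would adopt if the naive argument stalls, is to carry out the same upper/lower bounds directly at the level of the infinitesimal Kobayashi metric, proving $k_{\Omega_{t_k}}\to k_{\B^N}$ uniformly on compact subsets of $\B^N\times\C^N$ by the same Montel-type reasoning, and then integrating along paths using Royden's representation of the Kobayashi distance.
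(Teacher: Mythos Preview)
Your proposal is correct in outline, but it takes a substantially more laborious route than the paper's. The paper simply exploits the two-sided sandwich that Hausdorff convergence already gives: for each $\delta>0$ one finds $r_1\in(0,1)$ and $r_2\in(0,1)$ with $\B(0,r_1)\subset\Omega_{t_k}\subset\B(0,1/r_2)$ for all large $k$, and then monotonicity of the Kobayashi distance under inclusion yields
\[
K_{\B(0,1/r_2)}(z,w)\ \le\ K_{\Omega_{t_k}}(z,w)\ \le\ K_{\B(0,r_1)}(z,w).
\]
Since $K_{\B(0,r)}(z,w)=K_{\B^N}(z/r,w/r)$, both flanking terms converge to $K_{\B^N}(z,w)$ uniformly on compacta as $r\to 1$. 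That is the entire proof: no Lempert, no normal families, no chains, no separate upgrade to uniform convergence.

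Your upper estimate via rescaled complex geodesics is fine but redundant once you know $\B(0,r_1)\subset\Omega_{t_k}$; monotonicity alone gives the bound. Your lower estimate via Montel-type extraction of near-minimising chains is the place where you do real extra work, and you correctly flag the unbounded-chain-length obstacle; your fallback via the infinitesimal metric would also work. The virtue of your approach is that it is a general template for limit domains where no scaling formula is available; the virtue of the paper's approach is that, because both sandwich domains are balls with an explicit rescaling identity, the lower bound is just as cheap as the upper bound and the chain-length issue never arises.
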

	\begin{proof}
Fix $r_0\in (0,1)$. For $r>0$ let $\B(0,r)$ be the ball of center $0$ and radius $r$. Since, clearly, $K_{\B(0,r)}(\cdot,\cdot)$ converges uniformly on $\B(0,r_0)$ to $K_{\B^N}(\cdot,\cdot)$ for $r\to 1^-$, for every $\delta>0$ we can find $r_1\in (r_0,1)$ such that $K_{\B(0,r_1)}(z,w)\leq K_{\B^N}(z,w)+\delta$ for all $z,w\in \B(0,r_0)$. But, for $k$ large enough, $\B(0,r_1)\subset \Omega_{t_k}$. Thus, for $k$ large enough, and for all $z,w\in \B(0,r_0)$,
		\[
		 K_{\Omega_{t_k}}(z,w)\leq K_{\B(0,r_1)}(z,w)\leq K_{\B^N}(z,w)+\delta.
		\]
Similarly, we show that \[
		k_{\B^N}(z,w) -\delta \leq K_{\B(0,1/r_2)}(z,w) \leq  K_{\Omega_{t_k}}(z,w)
		\] for some $r_2\in (r_0,1)$,
and we are done.

\end{proof}
	
	\begin{proposition}\label{Prop:persistence-of-geodesic}
		Let $\mathcal F$ be the family of all geodesic rays $\gamma:[0,+\infty)\to \Omega_0$ such that $\lim_{t\to +\infty}\gamma(t)=e_1$. Let $\{t_k\}\subset(0,1)$ be a strictly increasing sequence converging to $1$. Then, 
		for every $w_0\in \B^N$ there exists a sequence of  geodesics $\{\gamma_k\}\subset\mathcal F$ such that $\{A_{t_k}^{-1}(\gamma_k)\}$ converges uniformly on compacta to a geodesic ray $\eta:[0,+\infty)\to\B^N$ such that $\eta(0)=w_0$ and $\lim_{t\to+\infty}\eta(t)=e_1$. 
	\end{proposition}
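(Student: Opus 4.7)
The plan is to build $\gamma_k$ by first picking a geodesic ray $\sigma_k$ in $\Omega_{t_k}$ starting at $w_0$ and landing at $e_1$, and then setting $\gamma_k := A_{t_k}\circ\sigma_k$. For $k$ large, $w_0\in\Omega_{t_k}$ by Lemma~\ref{Lem:scaling}(2), and $e_1\in\partial\Omega_{t_k}$ since $A_{t_k}(e_1)=e_1$. As $\Omega_{t_k}$ is bounded and $C^{2+\alpha}$-strongly pseudoconvex for $k$ large (Lemma~\ref{Lem:scaling}(1)), Proposition~\ref{Prop:geo-in-sp} applies to $\Omega_{t_k}$: choosing a sequence $z_n\in\Omega_{t_k}$ with $z_n\to e_1$, the geodesic segments in $\Omega_{t_k}$ joining $w_0$ to $z_n$ converge (up to affine reparametrization) to a geodesic ray $\sigma_k:[0,+\infty)\to\Omega_{t_k}$ with $\sigma_k(0)=w_0$ and $\lim_{t\to+\infty}\sigma_k(t)=e_1$. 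Then $\gamma_k=A_{t_k}\circ\sigma_k$ is a geodesic ray of $\Omega_0$ starting at $A_{t_k}(w_0)\in\Omega_0$ and landing at $A_{t_k}(e_1)=e_1$ (using that $A_{t_k}$ extends continuously at $e_1$), so $\gamma_k\in\mathcal F$ and $A_{t_k}^{-1}\circ\gamma_k=\sigma_k$.

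Next I would show that $\sigma_k$ converges uniformly on compacta to the unique geodesic ray $\eta:[0,+\infty)\to\B^N$ with $\eta(0)=w_0$ and landing at $e_1$. For any $T>0$, I would confine $\sigma_k([0,T])$ to a suitable closed Kobayashi ball $K_T\subset\B^N$ centered at $w_0$ of radius slightly larger than $T$, which is compact in $\B^N$ and, by Hausdorff convergence, is contained in $\Omega_{t_k}$ for $k$ large. A first-exit-time argument combined with the uniform convergence $K_{\Omega_{t_k}}\to K_{\B^N}$ on $K_T$ (Lemma~\ref{Lem:metric-conv}) rules out $\sigma_k$ leaving $K_T$ before time $T$. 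Equicontinuity in Euclidean distance on $K_T$ then follows from the $1$-Lipschitz property of $\sigma_k$ for $K_{\Omega_{t_k}}$ together with the uniform comparability of $K_{\Omega_{t_k}}$ and the Euclidean metric on $K_T$ (again Lemma~\ref{Lem:metric-conv}). By Arzelà--Ascoli a subsequence converges uniformly on compacta to some $\eta_*:[0,+\infty)\to\B^N$ with $\eta_*(0)=w_0$. Passing to the limit in $K_{\Omega_{t_k}}(\sigma_k(s),\sigma_k(t))=|s-t|$ via Lemma~\ref{Lem:metric-conv} shows that $\eta_*$ is itself a geodesic ray of $\B^N$.

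The hard part will be showing that $\eta_*$ lands at $e_1$. Granted this, uniqueness of the geodesic ray in $\B^N$ between $w_0$ and the boundary point $e_1$ (it is the real slice of the unique complex line through $w_0$ and $e_1$) forces $\eta_*=\eta$, and since every subsequential limit coincides with $\eta$ the whole sequence converges. For the landing, the crucial input is the $C^{2+\alpha}$-convergence in Lemma~\ref{Lem:scaling}(1) near $e_1$. I would use it to pass to the limit in Busemann functions: the function $b_k(z):=\lim_{t\to+\infty}[K_{\Omega_{t_k}}(z,\sigma_k(t))-t]$ is the Busemann function of $\Omega_{t_k}$ associated with $\sigma_k$, and satisfies $b_k(\sigma_k(t))=-t$; the $C^{2+\alpha}$-stability near $e_1$ should yield $b_k\to b_\infty$ on compacta of $\B^N$, where $b_\infty$ is the Busemann function of $\B^N$ at $e_1$. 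Passing to the limit gives $b_\infty(\eta_*(t))=-t\to-\infty$, which forces $\eta_*(t)\to e_1$ because $b_\infty$ is bounded below on any compact subset of $\B^N$ avoiding $e_1$.
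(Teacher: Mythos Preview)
Your construction of $\gamma_k$ and the Arzel\`a--Ascoli compactness argument are essentially the paper's own proof, just phrased slightly differently: the paper sets $z_k:=A_{t_k}(w_0)\in\Omega_0$, picks $\gamma_k\in\mathcal F$ with $\gamma_k(0)=z_k$, and then works with $\eta_k:=A_{t_k}^{-1}\circ\gamma_k$, which is exactly your $\sigma_k$. For boundedness and equicontinuity the paper takes the shorter route of using the enclosure $\Omega_{t_k}\subset\B(0,\rho_k)$ with $\rho_k\downarrow 1$ (from Lemma~\ref{Lem:scaling}(2)) and the monotonicity $K_{\B(0,\rho_k)}\le K_{\Omega_{t_k}}$, rather than your first-exit-time argument; both work, but the enclosure is cleaner.

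The genuine gap is in your landing step. You propose to prove $\eta_*(t)\to e_1$ by showing that the Busemann functions $b_k$ of $\Omega_{t_k}$ converge to the Busemann function $b_\infty$ of $\B^N$ at $e_1$. This is \emph{not} a consequence of Lemma~\ref{Lem:metric-conv}, which only controls $K_{\Omega_{t_k}}$ on compacta: the Busemann function involves $K_{\Omega_{t_k}}(z,\sigma_k(t))$ with $\sigma_k(t)$ escaping to the boundary, precisely the regime where Lemma~\ref{Lem:metric-conv} gives nothing. Turning the $C^{2+\alpha}$-convergence of Lemma~\ref{Lem:scaling}(1) into a statement about uniform boundary estimates for $K_{\Omega_{t_k}}$ is possible but is a substantial piece of work you have not supplied, and your ``should yield'' flags this. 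The paper instead dispatches the landing in one line by invoking Lemma~\ref{Lem:scaling}(1); the mechanism behind this is the uniform visibility of Lemma~\ref{lem:vis}: if $\eta_*$ landed at some $q\neq e_1$, then for large $T$ the tail $\sigma_k|_{[T,\infty)}$ would join a neighborhood of $q$ to $e_1$ in $\Omega_{t_k}$ and hence meet a fixed compact $L\subset\B^N$ at some time $s_k>T$, forcing $s_k=K_{\Omega_{t_k}}(w_0,\sigma_k(s_k))$ to be bounded by Lemma~\ref{Lem:metric-conv}, contradicting $s_k>T$ with $T$ arbitrary. Replacing your Busemann argument with this visibility step closes the gap.
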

	\begin{proof}
		Fix $w_0\in \B^N$. Since $A_{t_k}^{-1}$ is an automorphism of $\B^N$, it follows that there exists a sequence $\{z_k\}_{k>k_0}$, converging to $e_1$ such that $A^{-1}_{t_k}(z_k)=w_0$ for all $k>k_0$. Let $\gamma_k\in\mathcal F$ be such that $\gamma_k:[0,+\infty)\to \Omega_0$ satisfies $\gamma_k(0)=z_k$ and $\lim_{t\to+\infty}\gamma_k(t)=e_1$. 
		
		Then $\eta_k:=A^{-1}_{t_k}\circ\gamma_k:[0,+\infty)\to \Omega_{t_k}$ is a geodesic in $\Omega_k$ such that $\eta_k(0)=w_0$ and $\lim_{t\to+\infty}\eta_k(t)=e_1$. 
		
Take a sequence $\rho_k>1,$ decreasing to $1$ such that $\Omega_{t_k}\subset \B(0,\rho_k)$. Now, by Lemma~\ref{Lem:metric-conv}, for every $T>0$ and for all $k>k_0$ we have
	\[
		 K_{\B(0,\rho_{k})}(\eta_k(T), w_0)\leq K_{\Omega_{t_k}}(\eta_k(T), w_0)=T.
		\]
		Therefore, $\{\eta_k\}$ is uniformly bounded on compacta. 
Also, for every $s,t\in [0,+\infty)$,
		\[
		K_{\B(0,\rho_{k})}(\eta_k(t), \eta_k(s))\leq K_{\Omega_{t_k}}(\eta_k(t), \eta_k(s))=|t-s|,
		\]
		therefore,  $\{\eta_k\}$  is equicontinuous on compacta. By Arzel\`a-Ascoli theorem, we can thus extract a subsequence uniformly converging on compacta to a continuous curve $\eta:[0,+\infty)\to \B^N$ such that $\eta(0)=w_0$. Since $K_{\Omega_{t_k}}(\cdot,\cdot)$ converges uniformly on compacta to $K_{\B^N}(\cdot,\cdot)$ by Lemma~\ref{Lem:metric-conv}, it follows that $\eta$ is a geodesic in $\B^N$. Moreover, by (1) of Lemma~\ref{Lem:scaling}, it follows that $\lim_{t\to+\infty}\eta(t)=e_1$. 
		
		Repeating the previous argument for every subsequence of $\{\eta_k\}$, by the uniqueness of geodesics in $\B^N$, we have that $\{\eta_k\}$ converges uniformly on compacta to $\eta$, and we are done.
	\end{proof}
	
	We also need a sort of converse to the previous proposition. For this result we need the following ``visibility lemma'' slightly extending  \cite[Lemma~10]{Kos-Nik-Tho 2023}:
	
	\begin{lemma}\label{lem:vis}
		Let $p,q\in \partial \B^N$, $p\neq q$.  Then for every open neighbourhood $U$ and $V$ of, respectively, $p$ and $q$, such that $U\cap V=\emptyset$, there exists a compact set $L\subset\B^N$ such that for every $t\in [0,1)$ and every  geodesic $\gamma$ of  $\Omega_{t}$ which joins a point in $U$ with a point in $V$, the image of $\gamma$ intersects $L$.
	\end{lemma}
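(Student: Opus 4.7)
The plan is to argue by contradiction and compactness, splitting the argument into two cases depending on whether the parameter $t$ stays bounded away from $1$ or not. Suppose the lemma fails. After shrinking $U$ and $V$, we may assume $\overline U\cap\overline V=\emptyset$. Taking the exhausting sequence of compacts $L_n:=\{z\in\B^N:|z|\le 1-1/n\}$ in $\B^N$, there exist $t_n\in[0,1)$, $a_n\in U\cap\Omega_{t_n}$, $b_n\in V\cap\Omega_{t_n}$, and $\Omega_{t_n}$-geodesics $\gamma_n$ joining $a_n$ to $b_n$ whose images miss $L_n$. Passing to subsequences, $t_n\to t_\ast\in[0,1]$, $a_n\to a^\ast\in\overline U$ and $b_n\to b^\ast\in\overline V$. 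Since $a_n$ itself sits on the image of $\gamma_n$ and that image misses $L_n$, we must have $|a_n|\to 1$; hence $a^\ast\in\partial\B^N$, and likewise $b^\ast\in\partial\B^N$, with $a^\ast\ne b^\ast$.

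In the case $t_\ast<1$ the plan is to transport the problem to the fixed strongly pseudoconvex domain $\Omega_{t_\ast}$ through the biholomorphism $\Phi_n:=A_{t_\ast}^{-1}\circ A_{t_n}$, which sends $\Omega_{t_n}$ onto $\Omega_{t_\ast}$ and converges to the identity uniformly on compact subsets of $\B^N$ as $t_n\to t_\ast$. The curves $\sigma_n:=\Phi_n\circ\gamma_n$ are then geodesics of $\Omega_{t_\ast}$ whose endpoints $\Phi_n(a_n)$ and $\Phi_n(b_n)$ converge to the distinct points $a^\ast,b^\ast\in\partial\B^N$. Applying \cite[Lemma~10]{Kos-Nik-Tho 2023} (or its straightforward extension to arbitrary disjoint neighborhoods) to the single domain $\Omega_{t_\ast}$ produces a compact $L^\star\subset\Omega_{t_\ast}\cap\B^N$ that is met by every such $\sigma_n$. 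Pulling $L^\star$ back through $\Phi_n^{-1}$ yields a sequence of compacts converging to $L^\star$, and therefore contained in a fixed compact of $\B^N$ for $n$ large; this compact is met by every $\gamma_n$, contradicting the escape hypothesis.

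In the case $t_\ast=1$ we exploit $\Omega_{t_n}\to\B^N$ in Hausdorff topology (Lemma~\ref{Lem:scaling}(2)) and $K_{\Omega_{t_n}}\to K_{\B^N}$ uniformly on compact subsets of $\B^N$ (Lemma~\ref{Lem:metric-conv}). The standard visibility of $\B^N$ at the distinct boundary points $a^\ast,b^\ast$ yields a compact $L_{\B}\subset\B^N$ and a constant $c>0$ such that every curve in $\B^N$ joining small neighborhoods of $a^\ast$ and $b^\ast$ while avoiding $L_{\B}$ has Kobayashi length exceeding the $\B^N$-Kobayashi distance between its endpoints by at least $c$. Sandwiching $\Omega_{t_n}$ between two balls $\B(0,r_n^-)\subset\Omega_{t_n}\subset\B(0,r_n^+)$ with $r_n^\pm\to 1$ and using the explicit M\"obius formula for $K_{\B(0,r)}$, one shows that $|T_n-K_{\B^N}(a_n,b_n)|\to 0$, where $T_n:=K_{\Omega_{t_n}}(a_n,b_n)$. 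Applying the visibility inequality to (the rescaling to $\B^N$ of) $\gamma_n$ then contradicts the escape hypothesis for $n$ large.

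The main difficulty lies in Case 2: because both $a^\ast$ and $b^\ast$ are in $\partial\B^N$, both $K_{\Omega_{t_n}}(a_n,b_n)$ and $K_{\B^N}(a_n,b_n)$ diverge, so the uniform convergence of Kobayashi metrics on compacta provided by Lemma~\ref{Lem:metric-conv} does not directly yield a sharp comparison between them. Closing the argument requires the quantitative sandwich estimate with balls of radii $r_n^\pm\to 1$, where the explicit M\"obius form of the Kobayashi distance together with the Hausdorff rate from Lemma~\ref{Lem:scaling}(2) ensures that the two distances differ by $o(1)$, precisely canceling the divergences.
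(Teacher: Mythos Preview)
Your Case~2 contains a genuine gap. The claimed estimate $|K_{\Omega_{t_n}}(a_n,b_n)-K_{\B^N}(a_n,b_n)|\to 0$ cannot be extracted from the information at hand. Lemma~\ref{Lem:scaling}(2) gives only \emph{qualitative} Hausdorff convergence, with no rate, whereas your contradiction sequence has $|a_n|>1-1/n$ approaching $\partial\B^N$ at a speed entirely unrelated to $t_n$; the sandwich $K_{\B(0,r_n^+)}\le K_{\Omega_{t_n}}\le K_{\B(0,r_n^-)}$ then carries an error of order $\log\tfrac{r_n^+-|a_n|}{r_n^--|a_n|}$, which need not tend to $0$ and is not even finite unless $|a_n|<r_n^-$, something you cannot guarantee. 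More basically, $a_n,b_n$ lie in $\Omega_{t_n}$, not necessarily in $\B^N$, so $K_{\B^N}(a_n,b_n)$ and the $\B^N$-length of $\gamma_n$ may well be undefined; the detour inequality for $\B^N$ therefore does not apply to $\gamma_n$ or to any rescaling of it without a further length comparison that runs into the same rate problem. Your Case~1 has a milder but real issue of the same flavor: the visibility compact $L^\star$ lives in $\Omega_{t_\ast}$, not in $\B^N$, and you give no argument why meeting $L^\star$ forces $\gamma_n$ through a compact of $\B^N$.

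The paper's proof is organized around a different dichotomy and sidesteps all of this. When both $p$ and $q$ are distinct from the degenerate point $-e_1$, the uniform-in-$t$ visibility is already established in \cite{Kos-Nik-Tho 2023}. The only case needing work is $p=-e_1$; there one still argues by contradiction, but instead of comparing distances one uses connectedness: the escaping geodesics $\gamma_k$ join a neighbourhood of $-e_1$ to a neighbourhood of $q$ while staying outside every compact of $\B^N$, so one may pick parameter values $s_k$ with $\gamma_k(s_k)$ accumulating at a third boundary point $q'\in\partial\B^N\setminus\{-e_1,q\}$. The sub-arcs of $\gamma_k$ from $\gamma_k(s_k)$ to the endpoint in $V$ are then geodesics joining neighbourhoods of $q'$ and $q$, neither of which is $-e_1$, and the already-proved case applies to give the contradiction. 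This reduction via an intermediate boundary point is the idea your approach is missing.
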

	\begin{proof}
		In case $p,q\neq -e_1$, the result follows immediately from \cite[Lemma~5]{Kos-Nik-Tho 2023}. 
		
		In case $p=-e_1$, we argue by contradiction. If there exists a sequence $\{t_k\}$ converging to $1$ and geodesics $\gamma_k$ of $\Omega_{t_k}$ joining a point in $U$ to a point of $V$ such that the images of $\gamma_k$ escape all compact subsets of $\B^N$, then we can find a sequence $s_k>0$ such that $\{\gamma_k(s_k)\}$ converges to some point $q'\in \partial\B^N\setminus\{-e_1, q\}$. But then, we can apply \cite[Lemma~10]{Kos-Nik-Tho 2023} to $q, q'$ and reach a contradiction.
	\end{proof} 
	
	As a matter of notation, we let $\pi_1:\C^N\to \C$ be the projection on the first component.
	
	\begin{proposition}\label{Prop:convergence-of-geodesic}
		Let $\mathcal F$ be the family of all geodesic rays $\gamma:[0,+\infty)\to \Omega_0$ such that $\lim_{t\to +\infty}\gamma(t)=e_1$. Let $\{t_k\}\subset(0,1)$ be a strictly increasing sequence converging to $1$ and let $\{\gamma_k\}\subset\mathcal F$ be such that there is $0<\epsilon<1$ so that for every $k$ 
		\[
		0<\Re \pi_1(A_{t_k}^{-1}(\gamma_k(0)))<1-\epsilon,
		\] 
		and $\{A_{t_k}^{-1}\gamma_k(0)\}$ is compactly divergent in $\B^N$.
		Then, there exists a subsequence of $\{A_{t_k}^{-1}\circ \gamma_k\}$ such that, after a suitable affine reparametrization, converges uniformly on compacta to a geodesic line  $\eta$  in $\B^N$ joining $e_1$ to a point $\xi\in\partial\B^N$ with $0\leq \Re \pi_1(\xi)\leq 1-\epsilon$.
	\end{proposition}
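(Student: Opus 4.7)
The plan is to pull back the $\gamma_k$ via $A_{t_k}^{-1}$ to the scaled domains $\Omega_{t_k}$, reparametrize by an affine shift so the resulting geodesics meet a common compact subset of $\B^N$, and extract a limit via Arzel\`a--Ascoli combined with Lemma~\ref{Lem:metric-conv}. The endpoints of the limit curve will be pinned down using the visibility Lemma~\ref{lem:vis}.

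First, setting $\eta_k:=A_{t_k}^{-1}\circ\gamma_k$, the map $A_{t_k}^{-1}$ is a biholomorphism from $\Omega_0$ onto $\Omega_{t_k}$, so $\eta_k$ is an arc-length-parametrized geodesic ray of $\Omega_{t_k}$; since $A_{t_k}(e_1)=e_1$, it still lands at $e_1$. After passing to a subsequence one may assume $\eta_k(0)\to\xi\in\partial\B^N$ with $0\le\Re\pi_1(\xi)\le 1-\epsilon$, so $\xi\neq e_1$. Picking disjoint neighbourhoods $U\ni e_1$ and $V\ni\xi$ in $\overline{\B^N}$ and using Lemma~\ref{lem:vis} to obtain a compact $L\subset\B^N$ that every geodesic joining $U$ and $V$ in any $\Omega_{t_k}$ must visit, I would find $s_k>0$ with $\eta_k(s_k)\in L$ by taking, for $k$ large, a time at which $\eta_k$ has already entered $U$.

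The delicate step is to show $s_k\to+\infty$. I would argue by contradiction: if $s_k\le M$ along a subsequence, the $1$-Lipschitz property of $\eta_k$ with respect to $K_{\Omega_{t_k}}$ together with the inclusion of the $\Omega_{t_k}$ in a fixed Euclidean ball provides Euclidean equicontinuity, so Arzel\`a--Ascoli yields a subsequential uniform limit $\eta^\ast:[0,s^\ast]\to\overline{\B^N}$ with $\eta^\ast(0)=\xi\in\partial\B^N$ and $\eta^\ast(s^\ast)\in L\subset\B^N$. Lemma~\ref{Lem:metric-conv} would then imply that $\eta^\ast|_{[\delta,s^\ast]}$ is an arc-length Kobayashi geodesic of $\B^N$ for each $\delta>0$, so $K_{\B^N}(\eta^\ast(\delta),\eta^\ast(s^\ast))\le s^\ast$; letting $\delta\to 0^+$ yields a finite Kobayashi distance from a boundary point of $\B^N$ to an interior one, contradicting completeness.

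Once $s_k\to+\infty$, I would set $\tilde\eta_k(s):=\eta_k(s+s_k)$ on $[-s_k,+\infty)$; the same equicontinuity, the fact that $\tilde\eta_k(0)\in L$, and Lemma~\ref{Lem:metric-conv} would give via Arzel\`a--Ascoli a subsequence converging uniformly on compacta of $\R$ to a curve $\eta:\R\to\B^N$ which, by passing to the limit in $K_{\Omega_{t_k}}(\tilde\eta_k(s),\tilde\eta_k(t))=|t-s|$, is an arc-length geodesic line in $\B^N$. To identify $\eta(+\infty)$: if it were some $p_+\neq e_1$, I would pick disjoint neighbourhoods $U_+\ni e_1$ and $V_+\ni p_+$, fix $\Sigma$ with $\eta(\Sigma)\in V_+$, exploit that $\tilde\eta_k$ still lands at $e_1$ to find $T_k>\Sigma$ with $\tilde\eta_k(T_k)\in U_+$, and apply Lemma~\ref{lem:vis} to get a compact $L_+$ met by $\tilde\eta_k|_{[\Sigma,T_k]}$ at some $\tau_k$; the same metric comparison used for $s_k\to+\infty$ forces $\tau_k$ to remain bounded, and varying $\Sigma\to+\infty$ then forces $\eta$ to meet $L_+$ at arbitrarily large parameters, contradicting $\eta(+\infty)\in\partial\B^N$. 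The analogous argument on the negative side, using $\tilde\eta_k(-s_k)=\eta_k(0)\to\xi$, gives $\eta(-\infty)=\xi$. I expect the main obstacle to be precisely the boundary bookkeeping, namely ensuring $s_k\to+\infty$ and correctly identifying both endpoints; in both cases this reduces to propagating visibility in $\Omega_{t_k}$ uniformly in $k$ and converting it to a statement about $\B^N$ through Lemma~\ref{Lem:metric-conv}.
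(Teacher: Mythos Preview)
Your proposal is correct and follows essentially the same strategy as the paper: pull back by $A_{t_k}^{-1}$, use the visibility Lemma~\ref{lem:vis} to find a common compact set, shift so the reparametrized geodesics pass through it at time $0$, extract a limit via Arzel\`a--Ascoli together with Lemma~\ref{Lem:metric-conv}, and pin down the two endpoints by visibility. Two small points where the paper is more direct: for $s_k\to+\infty$ the paper simply bounds $s_k=K_{\Omega_{t_k}}(\eta_k(0),\eta_k(s_k))\ge K_{\B(0,\rho)}(\eta_k(0),\eta_k(s_k))$ for any $\rho>1$ and lets $\rho\to 1^+$, avoiding your Arzel\`a--Ascoli detour; and for $\eta(+\infty)=e_1$ the paper defers to the argument of Proposition~\ref{Prop:persistence-of-geodesic} (via Lemma~\ref{Lem:scaling}(1)) rather than rerunning visibility, though your visibility argument also works once you observe that $\tau_k\ge\Sigma$ together with the bound $\tau_k\le C$ already gives the contradiction.
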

	\begin{proof} Since $\{\pi_1\circ A_{t_k}^{-1}\}$ converges uniformly on compacta of $\overline{\B^N}\setminus\{e_1\}$ to the constant map $\zeta\mapsto -1$, it follows that 
		$\lim_{t\to+\infty} \gamma_k(0)=e_1$. 
		
		Up to subsequences, we can assume that $\{A_{t_k}^{-1}(\gamma_k(0))\}$ converges to a point $\xi\in\partial\B^N$ such that $0\leq \Re \pi_1(\xi)\leq 1-\epsilon$, in particular, $\xi\neq e_1$. By Lemma~\ref{lem:vis},   there exists a compact subset $K\subset \B^N$ such that $A_{t_k}^{-1}(\gamma_k([0,+\infty))\cap K\neq\emptyset$ for all $k$ sufficiently large. 
		
		Let $s_k>0$ be such that $A_{t_k}^{-1}(\gamma_k(s_k))\in K$ for $k$ sufficiently large. Let $\sigma_k:[-s_k,+\infty)\to \Omega_{t_k}$ be the geodesic ray defined by $\sigma_k(t):=A_{t_k}^{-1}(\gamma_k(t-s_k))$. Note that $\{\sigma_k(-s_k)\}$ converges to $\xi$ and that $\sigma_k(0)\in K$ for $k$ sufficiently large. Since for any $\rho>1$
		\[
		K_{\B(\rho)}(\sigma_k(0),\sigma_k(-s_k))\leq K_{\Omega_{t_k}}(\sigma_k(0),\sigma_k(-s_k))=s_k,
		\] providing that $k$ is big enough,
		it follows  that $\lim_{k\to\infty}s_k=+\infty$.
		
		Arguing as in the proof of Proposition~\ref{Prop:persistence-of-geodesic} it is easy to see that, up to subsequences, $\{\sigma_k\}$ converges uniformly on compacta to a geodesic line $\eta:(-\infty,+\infty)\to\B^N$ such that $\lim_{t\to+\infty}\eta(t)=e_1$. 
		
		We are left to show that $\lim_{t\to-\infty}\eta(t)=\xi$. Suppose this is not the case and that $\lim_{t\to-\infty}\eta(t)=q\in\partial\B^N\setminus\{\xi\}$. Let $V$ and $U$ be open neighborhoods of $\xi$ and $q$ respectively such that $V\cap U=\emptyset$. 
		
		Fix  $M\in \N$, such that $\eta(-M)\in U$. Since $\{\sigma_k\}$ converges uniformly on compacta to $\eta$, there exists $k(M)$ so that, for $k\geq k(M)$ we have  $-s_k<-M$, $\sigma_k(-M)\in U$ and $\sigma_k(-s_k)\in V$. 
		
		Therefore, for all $k\geq k(M)$, $\sigma_k:[-s_k, -M]\to \Omega_{t_k}$ is a geodesic  which joins a point of $U$ to a point of $V$. By Lemma~\ref{lem:vis},   there exists a compact subset $L\subset \B^N$ and $-s_{M,k}\in (-s_k, -M)$ such that $\sigma_k(-s_{M,k})\in L$ for all $k\geq k(M)$. Hence,
		\[
		M\leq K_{\Omega_{t_k}}(\sigma_k(0),\sigma_k(-M))+K_{\Omega_{t_k}}(\sigma_k(-M),\sigma_k(-s_{M,k}))=K_{\Omega_{t_k}}(\sigma_k(-s_{M,k}), \sigma_k(0))\leq C,
		\]
		for some constant $C>0$ independent of $M$ and $k$, which gives a contradiction for $M\to \infty$.
	\end{proof}
	
	A similar argument, that we leave to the reader (see also \cite[Lemma~6]{Kos-Nik-Tho 2023}) gives:
	\begin{proposition}\label{Prop:convergence-of-single-geodesic}
		Let $\mathcal F$ be the family of all geodesic rays $\gamma:[0,+\infty)\to \Omega_0$ such that $\lim_{t\to +\infty}\gamma(t)=e_1$. Let $\{t_k\}\subset(0,1)$ be a strictly increasing sequence converging to $1$ and let $\{\gamma_k\}\subset\mathcal F$ such that $\{\gamma_k(0)\}$ is relatively compact in $\B^N$.
		Then, there exist a subsequence $\{k_m\}$ and $s_m\geq 0$ with $\lim_{m\to\infty}s_m=+\infty$ such that   $\{A_{t_{k_m}}^{-1}(\gamma_{k_m}(t+s_m))\}$ converges uniformly on compacta to a geodesic line  $\eta:\R\to \B^N$ whose image is $(-1,1)e_1$. 
		
		Conversely, if  $s_k>0$ is chosen so that $\{A_{t_k}^{-1}(\gamma_k(s_k))\}$ is relatively compact in $\B^N$, then, up to subsequence, $\{A_{t_k}^{-1}(\gamma_k(s_k+t))\}$ converges  uniformly on compacta to a geodesic line  $\eta:\R\to \B^N$ whose image is $(-1,1)e_1$. 
	\end{proposition}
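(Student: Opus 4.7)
The plan is to handle both assertions by a scaling argument parallel to the proofs of Propositions~\ref{Prop:persistence-of-geodesic} and~\ref{Prop:convergence-of-geodesic}, with the role of the generic boundary point $\xi$ now played by $-e_1$. Throughout, write $\sigma_k:=A_{t_k}^{-1}\circ\gamma_k:[0,+\infty)\to\Omega_{t_k}$, which is a geodesic of $\Omega_{t_k}$ since $A_{t_k}^{-1}$ is a biholomorphism.

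I would prove the converse statement first. A direct inspection of the formula \eqref{eq:aut} shows that $A_{t_k}^{-1}$ converges uniformly on compacta of $\overline{\B^N}\setminus\{e_1\}$ to the constant map $-e_1$; combined with the relative compactness of $\{\gamma_k(0)\}$ in $\B^N$, this gives $\sigma_k(0)\to -e_1$. Next, $s_k\to+\infty$: choosing $\rho_k>1$ with $\Omega_{t_k}\subset\B(0,\rho_k)$ and $\rho_k\to 1^+$, the comparison
\[
s_k=K_{\Omega_{t_k}}(\sigma_k(0),\sigma_k(s_k))\geq K_{\B(0,\rho_k)}(\sigma_k(0),\sigma_k(s_k))
\]
forces $s_k\to+\infty$, because the hypothesis places $\sigma_k(s_k)$ in a fixed compact $L\subset\B^N$ while $\sigma_k(0)\to -e_1$ approaches $\partial\B(0,\rho_k)$. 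Setting $\eta_k(t):=\sigma_k(s_k+t)$ for $t\in[-s_k,+\infty)$ produces geodesics of $\Omega_{t_k}$ through the fixed compact $L$; an Arzel\`a-Ascoli argument combined with Lemma~\ref{Lem:metric-conv}, exactly as in the proof of Proposition~\ref{Prop:persistence-of-geodesic}, yields a subsequence converging uniformly on compacta to a continuous $\eta:\R\to\B^N$ which, again by Lemma~\ref{Lem:metric-conv}, is a geodesic line of $\B^N$.

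To identify $\eta$, I would replay the two visibility arguments at the end of the proof of Proposition~\ref{Prop:convergence-of-geodesic}. If $\lim_{t\to+\infty}\eta(t)=q\neq e_1$, pick disjoint neighborhoods $W\ni q$ and $U\ni e_1$; for large $M$ the point $\eta_k(M)$ lies in $W$ for $k$ large, and since each $\sigma_k$ lands at $e_1$ one can find $T_M^k$ with $\eta_k(T_M^k)\in U$. By Lemma~\ref{lem:vis} the geodesic segment $\eta_k|_{[M,T_M^k]}$ meets a fixed compact $L'\subset\B^N$, so there is $M'\in[M,T_M^k]$ with $\eta_k(M')\in L'$; combined with $\eta_k(0)\in L$ and the uniform convergence $K_{\Omega_{t_k}}\to K_{\B^N}$ on $L\cup L'$, this yields an upper bound on $M'\geq M$ independent of $k$ and $M$, contradicting $M\to+\infty$. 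The symmetric argument, with $\sigma_k(0)=\eta_k(-s_k)\to -e_1$ replacing the landing at $e_1$, shows $\lim_{t\to-\infty}\eta(t)=-e_1$. Since the geodesic line of $\B^N$ joining $-e_1$ to $e_1$ has image the real diameter $(-1,1)e_1$, the converse is established.

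The first assertion reduces immediately to the converse by producing $s_k$ via visibility: apply Lemma~\ref{lem:vis} to disjoint neighborhoods of $-e_1$ and $e_1$ to obtain a compact $L\subset\B^N$ met by the image of $\sigma_k$ for all sufficiently large $k$ (the initial point lies near $-e_1$ for large $k$ by the formula for $A_{t_k}^{-1}$, and $\sigma_k$ lands at $e_1$); pick any $s_k\geq 0$ with $\sigma_k(s_k)\in L$ and apply the converse to the sequence thus chosen. The main obstacle I foresee is the careful interchange of the two limits $k\to+\infty$ and $t\to\pm\infty$ in the endpoint identification of $\eta$; the family-uniform visibility supplied by Lemma~\ref{lem:vis} is precisely the tool that makes this possible.
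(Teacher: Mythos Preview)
Your proposal is correct and follows precisely the route the paper indicates: the authors leave the proof to the reader as ``a similar argument'' to Propositions~\ref{Prop:persistence-of-geodesic} and~\ref{Prop:convergence-of-geodesic} (citing also \cite[Lemma~6]{Kos-Nik-Tho 2023}), and you have carried out exactly that scheme, with the new ingredient being that $\sigma_k(0)\to -e_1$ forces both endpoints of the limit line to be $\pm e_1$. The reduction of the first assertion to the converse via Lemma~\ref{lem:vis} is clean and matches the intended strategy.
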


	\section{The proof of Theorem~\ref{main-intro-inner} and Theorem~\ref{main-intro}}\label{section:main}
	
	\begin{proof}[Proof of Theorem~\ref{main-intro-inner}]
		We first show that $F$ is proper.
		
		For each $\gamma\in \mathcal F$ we can assume that $\gamma(0)=p$. We need to show that if $\{z_k\}$ is a sequence in $D$ converging to some $q\in \partial D$ then $\{F(z_k)\}$ accumulates to $\partial G$.
		
		To this aim, let $\gamma_k\in \mathcal F$ be such that $\gamma_k(T_k)=z_k$ for some $T_k\geq 0$. Note that $\lim_{k\to +\infty}T_k=+\infty$. Indeed, 
		\[
		T_k=K_D(\gamma_k(0), \gamma_k(T_k))=K_D(p, z_k),
		\]
		and $\lim_{k\to \infty} K_D(p, z_k)=+\infty$ since $D$ is complete hyperbolic. By hypothesis,
		\[
		K_G(F(p), F(z_k))=K_G(F(\gamma_k(0)), F(\gamma_k(T_k)))=K_D(\gamma_k(0), \gamma_k(T_k))=T_k,
		\]
		hence $\lim_{k\to \infty}K_G(F(p), F(z_k))=+\infty$, which, since $G$ is a domain (hence connected), implies that $\{F(z_k)\}$ accumulates on $\partial G$.
		
		Now we show that $F$ is actually a biholomorphism onto $G$. 
		
		If there were $q\in D$, $q\neq p$ such that $F(p)=F(q)$ then a geodesic $\gamma\in\mathcal F$ joining $p=\gamma(0)$ with $q=\gamma(t)$, for some $t>0$, could not be mapped onto a geodesic as $F\circ\gamma$ were not injective since $F(\gamma(0))=F(\gamma(t))$. Thus $F^{-1}(F(p))=\{p\}$. 
		
		We claim that the differential $dF_p$ is non degenerate. Indeed, if $dF_p$ is degenerate  there is a vector $0\neq X\in\mathbb C^n$ with $dF_p(X)=0$. Expanding, we have
		\begin{equation}\label{equation:Taylor}
		F(p+tX)=F(p)+O(t^2).
		\end{equation}
		But our assumption on preserving  geodesics of $\mathcal F$  implies that \begin{equation}\label{eq:equal-ista-DG}
		K_D(p,p+tX)=K_G(F(p),F(p+tX)),
		\end{equation}
		for $t>0$ small so that $p+tX\in D$. 
		
		Now, since $D$ is complete hyperbolic and then the Euclidean topology of $D$ is equivalent to the topology induced by $K_D$, there exists $c>0$ such that $K_D(p,p+tX)\geq  c\|p-(p+tX)\|$. Also, by definition of Kobayashi pseudo-distance, there exists $C>0$ such that $K_G(F(p),w)\leq C\|F(p)-w\|$ for $w$ close to $F(p)$. Therefore, dividing  \eqref{eq:equal-ista-DG} by $t>0$, and using \eqref{equation:Taylor}, we have
		\[
		c\|X\|\leq \frac{C}{t} \|F(p)-F(p+tX)\|=O(t),
		\]
		which, for $t\to 0^+$ gives a contradiction, and hence $dF_p$ is non degenerate. 
		
		Thus, the argument principle (see, {\sl e.g.}, \cite[Prop.~2 p.48]{Da}) implies that, for every open set $W\subset\subset D$ such that $\overline{W}$ is diffeomorphic to a ball and such that $\partial W$ has the same orientation of the $2N-1$-dimensional sphere $\mathbb S^{2N-1}$, the map $\frac{F(z)-F(p)}{\|F(z)-F(p)\|}:\partial W\to \mathbb S^{2N-1}$  has degree $1$.
		
		Now, let $w_0\in G$. Let $\eta:[0,1]\to G$ be a continuous injective curve such that $\eta(0)=F(p)$ and $\eta(1)=w_0$. Since $F$ is proper, $F^{-1}(\eta([0,1]))$ is contained in a compact set $K\subset\subset D$. Let $W$ be an open set as before such that $K\subset W$. By construction, $\|F(z)-\eta(t)\|>0$ for all $z\in\partial W$ and $t\in [0,1]$. Hence, $S_t:=\frac{F(z)-\eta(t)}{\|F(z)-\eta(t)\|}:\partial W \to \mathbb S^{2N-1}$  is well defined for $t\in [0,1]$ and homotopic to $S_0$. Therefore, the degree of $S_t$ is $1$ for all $t\in [0,1]$. Hence, again by the argument principle, $F^{-1}(w_0)$ contains exactly only one point. Thus, $F$ is a biholomorphism onto $G$.
	\end{proof}
	
	In order to  prove Theorem~\ref{main-intro} we need a lemma:
	
	\begin{lemma}\label{Lem:ball}
		Let $\Phi: \B^N\to \B^N$. Let $\mathcal G$ be the family of all geodesic rays landing at $e_1$. If $\Phi\circ \gamma$ is a geodesic of $\B^N$ for all $\gamma\in\mathcal G$ then $\Phi$ is an automorphism of $\B^N$.
	\end{lemma}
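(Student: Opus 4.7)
The plan is to pin down the structure of $\Phi$ through four successive reductions until it is forced to be an automorphism of $\B^N$.

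First I would show that the images $\Phi\circ\gamma$, as $\gamma$ runs over $\mathcal G$, all land at a common boundary point $q\in\partial\B^N$. Each $\Phi\circ\gamma$ is a geodesic ray of $\B^N$ and hence lands at some $q_\gamma\in\partial\B^N$ by Proposition~\ref{Prop:geo-in-sp}(3). For any two rays $\gamma,\eta\in\mathcal G$, Proposition~\ref{Prop:geo-in-sp}(4) gives $K_{\B^N}(\gamma(t),\eta(t))\le C$ for $t$ large, and since $\Phi$ is distance-decreasing the same bound holds for $(\Phi\circ\gamma,\Phi\circ\eta)$. As Kobayashi distances between sequences converging to distinct boundary points of $\B^N$ tend to $+\infty$, this forces $q_\gamma=q_\eta=:q$. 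Post-composing $\Phi$ with an automorphism of $\B^N$ sending $q$ to $e_1$ preserves the hypothesis, so I may assume $q=e_1$.

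Second, I would upgrade from real geodesic rays to complex geodesics via Vesentini's rigidity theorem. Fix a complex geodesic $\phi:\mathbb{D}\to\B^N$ with boundary limit $\phi(1)=e_1$. For every $a\in\mathbb{D}$, the curve $\phi\circ\gamma_a$, where $\gamma_a:[0,+\infty)\to\mathbb{D}$ is the Poincar\'e geodesic ray from $a$ to $1$, belongs to $\mathcal G$. By hypothesis $\Phi\circ\phi\circ\gamma_a$ is a $K_{\B^N}$-geodesic, so $\Phi\circ\phi:\mathbb{D}\to\B^N$ is $K_{\mathbb{D}}$-to-$K_{\B^N}$ isometric between two distinct points. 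Vesentini's theorem (see, e.g., \cite{A2}) then implies that $\Phi\circ\phi$ is itself a complex geodesic, whose boundary limit at $1$ is $e_1$ by the previous reduction. Thus $\Phi$ sends the family of complex geodesics ending at $e_1$ to itself.

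Third, I would pass to the Siegel upper half-space $\Sigma^N:=\{(w_1,w')\in\C\times\C^{N-1}:\Re w_1>|w'|^2\}$ via the Cayley transform $\Psi:\B^N\to\Sigma^N$ sending $e_1$ to $\infty$. Under $\Psi$ the complex geodesics ending at $e_1$ become the affine leaves $L_c=\{(w_1,c):\Re w_1>|c|^2\}$, $c\in\C^{N-1}$, which foliate $\Sigma^N$. Since $\tilde\Phi:=\Psi\circ\Phi\circ\Psi^{-1}$ carries each $L_{w'}$ isometrically onto some $L_{G(w')}$, it has the form $\tilde\Phi(w_1,w')=(F(w_1,w'),G(w'))$ with $G:\C^{N-1}\to\C^{N-1}$ holomorphic. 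Each $F(\cdot,w'):L_{w'}\to L_{G(w')}$ is a holomorphic isometric biholomorphism of right half-planes fixing $\infty$, so it must take the affine form $F(w_1,w')=\alpha(w')w_1+\beta(w')$ with $\alpha(w')>0$ real; being holomorphic and real-valued, $\alpha(w')\equiv a_0>0$ is a positive constant.

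Finally, the equality $F(L_{w'},w')=L_{G(w')}$ translates into $\Re\beta(w')=|G(w')|^2-a_0|w'|^2$. Since the left side is pluriharmonic, $\partial\bar\partial|G|^2=a_0\,\partial\bar\partial|w'|^2$, which in matrix form reads $(DG)^*(DG)=a_0 I$. Since $DG$ is holomorphic while $(DG)^*$ is antiholomorphic, the relation $(DG)^{-1}=(DG)^*/a_0$ forces $DG$ to be constant, necessarily of the form $\sqrt{a_0}\,U$ with $U\in\mathrm{U}(N-1)$. Hence $G$ is affine and $\tilde\Phi$ has exactly the form of an automorphism of $\Sigma^N$ fixing $\infty$ -- a composition of a Heisenberg translation, a positive dilation and a unitary rotation -- so $\Phi$ is an automorphism of $\B^N$. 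I anticipate the main technical care to be required in step two, where Vesentini's rigidity must be invoked with the correct parametrization of the complex geodesic landing at $e_1$; after that, the Siegel-coordinate computation reduces the remaining task to a routine pluriharmonicity argument.
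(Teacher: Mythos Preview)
Your argument is correct, but it follows a different route from the paper's proof. Both proofs hinge on the same intermediate goal: showing that $\Phi$ sends every \emph{complex} geodesic of $\B^N$ through $e_1$ to a complex geodesic. You reach this via Vesentini's two-point rigidity theorem, after first establishing a common landing point for all $\Phi\circ\gamma$; the paper instead normalizes a single complex geodesic to $\zeta\mapsto\zeta e_1$, applies Schwarz--Pick to the first coordinate, and then the maximum principle forces the remaining coordinates to vanish. From that common intermediate point the proofs diverge again: the paper simply invokes \cite[Theorem~1.2]{BKZ}, while you carry out an explicit computation in the Siegel half-space model, showing that $\tilde\Phi$ must be affine of the form of a Heisenberg automorphism. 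Your approach is longer but entirely self-contained and yields the explicit structure of $\Phi$; the paper's is quicker but relies on the authors' earlier slice-rigidity result. One small remark on your Step~3: to conclude that $F(\cdot,w')$ is a \emph{surjective} isometry of half-planes (and hence affine), you are implicitly using that a holomorphic Kobayashi isometry $\D\to\D$ is automatically a M\"obius transformation---this is immediate from Schwarz--Pick, but it is worth making explicit, since a priori $\Phi\circ\phi$ is only known to be an isometric embedding.
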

	\begin{proof}
		Let $\varphi:\D\to\B^N$ be a complex geodesic containing $e_1$ in its closure---that is, $\varphi$ is an injective holomorphic map such that its image is the intersection of a complex affine line with $\B^N$ and the closure of its image contains $e_1$. Hence, $\varphi:\D\to \B^N$ is an affine map. It is well known that all geodesics rays of $\B^N$ landing at $e_1$ are contained in one such a complex geodesic. We are going to show that $\Phi\circ \varphi$ is a complex geodesic of $\B^N$ for every $\varphi\in\mathcal G$. Then the result follows from \cite[Theorem~1.2]{BKZ}.
		
	To this aim, let $\varphi$ be a complex geodesic in $\B^N$ whose closure contains $e_1$. Up to post-composing $\varphi$ with an automorphism of $\B^N$ fixing $e_1$, we can assume that $\varphi(\zeta)=\zeta e_1$. Since $[0,1)\ni r\mapsto re_1$ is a geodesic of $\B^N$, it follows that $[0,1)\ni r\mapsto F(re_1)$ is a geodesic in $\B^N$. Up to composing with an automorphism of $\B^N$, we can assume that $F([0,1)e_1)=[0,1)e_1$. Let $F_1$ be the first component of $F$. Thus $F_1:\D\to \D$ is a holomorphic map which maps a geodesic into a geodesic of $\D$. By the Schwarz-Pick lemma, $F_1(\zeta e_1)=\zeta$ for all $\zeta\in \D$. Since $|F_1(\zeta e_1)|^2+|F'(\zeta e_1)|^2<1$ for all $\zeta\in \D$ (here $F'=(F_2,\ldots, F_N)$), we have that $\lim_{\D\ni \zeta\to e^{i\theta}}|F'(\zeta e_1)|=0$ for all $\theta\in \R$. By the maximum principle, $F'(\zeta e_1)\equiv 0$. Therefore, $F(\zeta e_1)=\zeta e_1$ for all $\zeta\in \D$ and we are done.
	\end{proof}

	\begin{proof}[Proof of Theorem~\ref{main-intro}]
		We divide the proof into some Steps:
		\smallskip
		
		{\sl Step 1.} {\sl There exists $p'\in\partial{ G}$ such that $\{F(\gamma(t))\}$ converges  to $p'$ for all $\gamma\in \mathcal F$}.
		
		\smallskip
		
If $\gamma\in\mathcal F$, by Proposition~\ref{Prop:geo-in-sp}.(3) there exists $p'\in\partial G$ such that $\gamma(t)$ converges to $p'$ as $t\to +\infty$. If $\eta\in \mathcal F$ then by Proposition~\ref{Prop:geo-in-sp}.(4) there exists $C>0$ such that $\lim_{t\to+\infty}K_D(\gamma(t), \eta(t))\leq C$. Hence, $\lim_{t\to+\infty}K_G(F(\gamma(t)), F(\eta(t)))\leq C$, which, by the standard estimates on the Kobayashi distance in strongly pseudoconvex domains (see also  D'Addezio's Lemma  \cite[Lemma~A.2]{BG}), implies that $F(\eta(t))$ converges to  $p'$ as $t\to+\infty$.

		\smallskip
		
		{\sl Step 2. $F$ is proper (and hence surjective).} 
		
		\smallskip

		Arguing as in the proof of Theorem~\ref{main-intro-inner}, we see that if $\{z_k\}\subset D$ converges to some $\xi\in\partial D\setminus\{p\}$, then $\{F(z_k)\}$ accumulates at $\partial G$. 
		
		In case $\{z_k\}\subset D$ converges to $p$, let $\gamma_0:[0,+\infty)\to D$ be a geodesic such that $\lim_{t\to+\infty}\gamma(t)=p$. 
		
		If there exists $C>0$ such that for every $k$ there exists $s_k>0$ with $K_D(\gamma(s_k), z_k)<C$ (in particular, $\{s_k\}$ converges to $+\infty$), then 
		\[
		K_G(F(\gamma(s_k)), F(z_k))\leq K_D(\gamma(s_k), z_k)<C,
		\]
		and, as before, it follows that $\{F(z_k)\}$ converges to $p'$. 
		
		Therefore, we can assume that 
		\begin{equation}\label{Eq:seq-tg}
		\lim_{k\to +\infty}K_D(\gamma_0([0,+\infty)), z_k)=+\infty.
		\end{equation}
		We argue by contradiction and assume that $\{F(z_k)\}$ is relatively compact in $G$.

		Let $T_D:D\to D_0$ and $T_G:G\to G_0$ be the maps defined in Section~\ref{Sec:scaling}, mapping, respectively, $p$ to $e_1$ and $p'$ to $e_1$, such that $G_0, D_0$ are contained in some balls  and the defining functions of $D_0, G_0$ at $e_1$ are as in \eqref{eq:def}. Let $F_0:=T_G\circ F\circ T^{-1}_D$. Since $T_D$  extends biholomorphically through the boundary of $D$, the family $T(\mathcal F)$ coincides with the family $\mathcal F'$ of geodesic rays $\gamma:[0,+\infty)\to D_0$ such that $\lim_{t\to+\infty}\gamma(t)=e_1$, and $F_0\circ \gamma$ is a geodesic in $G_0$ for all $\gamma\in \mathcal F'$.
		
		We let $\tilde z_k:=T_D(z_k)$ and $\tilde\gamma_0=T_D\circ \gamma_0$. For $t\in (-1,1)$ let $A_{t}$ be as in \eqref{eq:aut}. 
		
		Since $\{\tilde z_k\}$ converges to $e_1$, we can find a strictly increasing sequence $\{t_k\}$ converging to $1$ such that for all $k$,
		\begin{equation}\label{Eq:how-fast-rescaling}
		0<\Re \pi_1 (A_{t_k}^{-1}(\tilde z_k))<\frac{1}{2},
		\end{equation}
		where $\pi_1$ is the projection on the first component.
		
		Let $D_k:=A^{-1}_{t_k}(D_0)$. Also, let  $\tau_k:=A_{t_k}^{-1}\circ \tilde\gamma_0$. Finally, if $\eta_k\in\mathcal F'$ is such that $\eta_k(0)=\tilde z_k$, we let $\sigma_k:=A_{t_k}^{-1}\circ \eta_k$.
		
		Note that $\tau_k$ and $\sigma_k$ are geodesic rays in $D_k$ landing at $e_1$.
		
		By Proposition~\ref{Prop:convergence-of-single-geodesic},  up to  subsequences and a reparametrization $t\mapsto t+s_k$, for some $s_k>0$ such that $\lim_{k\to\infty}s_k=+\infty$, $\{\tau_k(t+s_k)\}$ converges to a geodesic line $\tau:(-\infty,+\infty)\to \B^N$ whose image is $(-1,1)e_1$. 
		
		Now, we claim that the sequence formed by $\sigma_k(0)=A_{t_k}^{-1}(\eta_k(0))=A_{t_k}^{-1}(T_D(z_k))$ is compactly divergent in $\B^N$. Indeed, assume that there is a subsequence,  still denoted by $\{\sigma_k(0)\}$, which converges to some $\xi_0\in \B^N$. Hence, there is $C>0$ such that $K_{\B^N}(\tau(0), \xi_0)<C$. Therefore, for $k$ sufficiently large, 
		\[
		K_{D_k}(\tau_k(s_k), \sigma_k(0))\leq 2C.
		\]
		Hence,
		\[
		K_D(\gamma_0(s_k), z_k)=K_{A^{-1}_{t_k}(T_D(D))}(A_{t_k}^{-1}(T_D(\gamma_0(s_k))), A_{t_k}^{-1}(T_D(z_k)))=K_{D_k}(\tau_k(s_k), \sigma_k(0))\leq 2C,
		\]
		contradicting \eqref{Eq:seq-tg}. 
		
		Thus, $\{\sigma_k(0)\}$ is compactly divergent in $\B^N$ and by \eqref{Eq:how-fast-rescaling} we can apply Proposition~\ref{Prop:convergence-of-geodesic}, and, up to subsequences and reparametrization, we see that $\{\sigma_k\}$ converges to a geodesic line  $\sigma:(-\infty,+\infty)\to\B^N$ such that $\lim_{t\to+\infty}\sigma(t)=e_1$ and $\lim_{t\to -\infty}\sigma(t)=q$ for some $q\neq -e_1$.
		
		In particular, by the uniqueness of geodesic lines in $\B^N$ landing at $e_1$, we have
		\begin{equation}\label{Eq:diff-geo}
		\tau((-\infty,+\infty))\cap \sigma((-\infty,+\infty))=\emptyset.
		\end{equation}

		By \eqref{Eq:seq-tg},
		\[
		\lim_{k\to +\infty}K_{D_k}(\tau_k([0,+\infty)), \sigma_k(0))=+\infty.
		\]
		
		Now, since $\{s_k\}$ converges to $+\infty$, it follows that $F_0(\tilde\gamma_0(s_k))$ converges to $e_1$. Hence, for every $k$ we can find $a_k\in [0,1)$ such that $\{A_{a_k}^{-1}(F_0(\tilde\gamma_0(s_k)))\}$ is  contained in a compact set $L\subset\B^N$.
		
		Let $G_k:=A^{-1}_{a_k}(G_0)$ and
		\[
		F_k:=A^{-1}_{a_k}\circ F_0\circ A_{t_k}:D_k\to G_k.
		\]
		By construction, $\{\tau_k(s_k)\}$ converges to $\tau(0)$ and $F_k(\tau_k(s_k))=A_{a_k}^{-1}(F_0(\tilde\gamma_0(s_k))\in L$.
		
		Hence, again up to subsequences, we can assume that $\{F_k\}$ converges uniformly on compacta to a holomorphic self-map $\Phi:\B^N\to \B^N$ with the property that $\Phi$ maps  $\tau(0)$ in $L$. 
		
		We claim that, for every geodesic ray $\eta:[0,+\infty)\to\B^N$ such that $\lim_{t\to+\infty}\eta(t)=e_1$ then $\Phi\circ\eta$ is a geodesic  in $\B^N$.
		
		Indeed, fix such a $\eta$ and let $w_0=\eta(0)$. By Proposition~\ref{Prop:persistence-of-geodesic}, there exists a sequence $\{\gamma_k\}\subset \mathcal F$ such that $\{A_{t_k}^{-1}\circ \gamma_k\}$ converges uniformly on compacta to a geodesic ray starting at $w_0$ and landing at $e_1$. By the uniqueness of geodesics in $\B^N$, such a geodesic ray is $\eta$ itself. 
		
		Therefore, by Lemma~\ref{Lem:metric-conv}, for every fixed $t,s\in [0,+\infty)$, we have
		\begin{equation*}
		\begin{split}
		K_{\B^N}(\Phi(\eta(t)),\Phi(\eta(s)))&=\lim_{k\to\infty}K_{G_k}(F_k(A_{t_k}^{-1}(\gamma_k(t))), F_k(A_{t_k}^{-1}(\gamma_k(s))))\\
		& =\lim_{k\to\infty}K_{A^{-1}_{a_k}(G_0)}(A^{-1}_{a_k}(F_0(\gamma_k(t))), A^{-1}_{a_k}(F_0(\gamma_k(s))))\\
		&=\lim_{k\to\infty}K_{G_0}(F_0(\gamma_k(t)), F_0(\gamma_k(s)))=\lim_{k\to\infty}|t-s|=|t-s|.
		\end{split}
		\end{equation*}
		It follows by Lemma~\ref{Lem:ball} that $\Phi$ is an automorphism of $\B^N$. In particular, by \eqref{Eq:diff-geo},
		\begin{equation}\label{Eq:false-geo-diff}
		\Phi(\tau((-\infty,+\infty)))\cap \Phi(\sigma((-\infty,+\infty)))=\emptyset.
		\end{equation}
		
		Now, $\{F_k(\tau_k(t+s_k))\}$ converges uniformly on compacta to $\Phi(\tau((-\infty,+\infty)))$. Since 
		\[
		F_k\circ \tau_k(t+s_k)=(A^{-1}_{a_k}\circ F_0\circ A_{t_k})\circ (A_{t_k}^{-1}\circ \tilde \gamma_0)(t+s_k)=A^{-1}_{a_k} \circ F_0 \circ \tilde \gamma_0(t+s_k),
		\]
		and $t\mapsto F_0 \circ \tilde \gamma_0(t+s_k)$ is a geodesic ray in $G_0$ landing at $e_1$,  
		by  Proposition~\ref{Prop:convergence-of-single-geodesic} we have
		\begin{equation}\label{Eq:image-tau-Phi}
		\Phi(\tau((-\infty,+\infty)))=(-1,1)e_1.
		\end{equation}
		On the other hand, since $\{F(z_k)\}$ is relatively compact in $G$---hence $\{F_0(\tilde z_k)\}$ is relatively compact in $G_0$, it follows as before and by  Proposition~\ref{Prop:convergence-of-single-geodesic} that
		\begin{equation}\label{Eq:image-sigma-Phi}
		\Phi(\sigma((-\infty,+\infty)))=(-1,1)e_1.
		\end{equation}
		But \eqref{Eq:image-tau-Phi} and \eqref{Eq:image-sigma-Phi} contradict \eqref{Eq:false-geo-diff}. Therefore, $F$ is proper. In particular, $\det(dF_z)\not\equiv 0$.
		
		\smallskip

		{\sl Step 3. $F$ is injective.} 
		
		\smallskip

		As in Step. 2, we let $D_0:=T_D(D)$, $G_0:=T_G(G)$ and $F_0:=T_G  \circ F \circ T_D^{-1}:D_0\to G_0$.  The statement is thus equivalent to show that $F_0$ is injective. 
		
		By Step. 3, $\mathcal Z:=\{z\in D_0: \det(dF_0)_z=0\}$ is a (possibly empty) analytic subvariety of $D_0$---if $D_0$ and $G_0$ are sufficiently smooth (see, {\sl e.g.}, \cite{Fo}) then $\mathcal Z=\emptyset$ and $F_0$ is a covering map, but we do not need this in our argument. Since $F_0$ is proper by Step.~2, $F_0(\mathcal Z)$ is an analytic subvariety of $G_0$ by Remmert's Theorem. In particular, $G_0':=G_0\setminus F_0(\mathcal Z)$ is a connected open and dense set in $G_0$ and $F:D_0\setminus \mathcal Z\to G_0'$ is a finite covering map. 
		
		For $w\in G_0$, let $\nu(w)$ be the number of points of $F^{-1}(w)$. Thus, $\nu(w)=\nu(w')=:\nu$ for all $w, w' \in G'_0$,  and $1\leq \nu(w)\leq \nu$ for all $w\in F_0(\mathcal Z)$. Hence, it is enough to show that $\nu=1$. 

Arguing by contradiction, we assume that $\nu\geq 2$. Fix $\tilde w\in G'_0$ such that the vector $\tilde w-e_1$ is almost complex tangential. Then by \cite[Corollary~4]{Kos-Nik-Tho 2023}, for $w$ sufficiently close to $\tilde w$ there exists a unique geodesic ray $\eta_w:[0,+\infty)\to G_0$ such that $\eta_w(0)=w$ and $\lim_{t\to+\infty} \eta_w(t)=e_1$. Moreover, if two such $\eta_w$ and $\eta_{w'}$ intersect at one point, then the range of one is contained in the other. Also $\eta_w(t)$ varies smoothly with respect to $w$ and $t$. Take an open subset $\Delta$ in a real hyperplane such that any $w\in \Delta$ gives rise to exactly one $\eta_w$. Then, $\Delta \times [0,\infty)\to D$, $(w,t)\mapsto \eta_w(t)$ is a $\mathcal C^1$ smooth diffeomorphism onto its image, consequently it is locally bi-Lipschitz. The set $F(\mathcal Z)$ has real codimension at least $2$, while the real codimension of $\Delta$ is clearly 1. Thus, the set of $w\in \Delta$ such that the range of $\eta_w$ does not intersect $F(\mathcal Z)$ has real Hausdorff codimension at least 1. In particular, it is non-empty. Fix one such  a $w_0$ and let $\eta:=\eta_{w_0}$. By  construction, $\eta([0,+\infty))\subset G_0'$.

		Since we are assuming that $\nu\geq 2$, there exists $z_1, z_2\in D_0$, $z_1\neq z_2$ such that $F(z_1)=F(z_2)=w_0$. 
		
		Let $\gamma_j:[0,+\infty)\to D_0$ be geodesic rays such that $\gamma_j(0)=z_j$ and $\lim_{t\to+\infty}\gamma_j(t)=e_1$, $j=1,2$. By hypothesis, $F_0\circ \gamma_j$ is a geodesic in $G_0$, and, by Step.~1, $\lim_{t\to+\infty}F_0(\gamma_j(t))=e_1$ for $j=1,2$. Moreover, by construction, $F_0(\gamma_1(0))=F_0(\gamma_2(0))=w_0$. Therefore, by the uniqueness of $\eta$, 		\[
		F_0(\gamma_j(t))=\eta(t), \quad \hbox{for $t\geq 0$ and $j=1,2$}. 
		\]
		
		It follows from  $\eta([0,+\infty))\subset G_0'$ that there are no $s,t>0$ such that $z=\gamma_1(s)=\gamma_2(t)$. Take any sequence $\{s_k\}$ converging to $+\infty$. Then $\gamma_1(s_k)\neq \gamma_2(s_k)$.
		
		Let $\{a_k\}$ be a strictly increasing sequence of positive numbers  converging to $1$ such that $\{A_{a_k}^{-1}(\gamma_1(s_k))\}$ is relatively compact in $\B^N$, where $A_{a_k}$ is given by \eqref{eq:aut}. 
		
		Likewise, we can find a sequence $\{b_k\}$, $0<b_k<1$ converging to $1$ such that $\{A_{b_k}^{-1}(\eta(s_k))\}$ is relatively compact in $\B^N$.
		
		Let $D_k:=A_{a_k}^{-1}(D_0)$, $G_k:=A_{b_k}^{-1}(G_0)$ and $F_k:=A^{-1}_{b_k}\circ F_0\circ A_{a_k}:D_k\to G_k$.
		
		By Proposition~\ref{Prop:convergence-of-single-geodesic}, up to subsequences and reparametrization, we can assume that $\{A_{a_k}^{-1}\circ \gamma_1\}$ converges uniformly on compacta to a geodesic $\sigma_1$ of $\B^N$ whose image is $(-1,1)e_1$.
		
		By Proposition~\ref{Prop:geo-in-sp}.(4), there exists $C>0$ such that for every $s_k$ we can find $u_k>0$ such that $K_{D_0}(\gamma_1(s_k), \gamma_2(u_k))<C$. It follows, again by Proposition~\ref{Prop:convergence-of-single-geodesic}, that, up to subsequences and reparametrization, also $\{A_{a_k}^{-1}\circ \gamma_2\}$ converges uniformly on compacta to a geodesic $\sigma_2$ of $\B^N$  whose image is $(-1,1)e_1$. In particular,
		\[
		\sigma_1((-\infty,+\infty))=\sigma_2((-\infty,+\infty)).
		\]
		
		A similar argument shows that, up to subsequences and reparametrization,  $\{A_{b_k}^{-1}\circ \eta\}$ converges uniformly on compacta to a geodesic $\tau$ of $\B^N$  whose image is $(-1,1)e_1$. 
		
		Arguing as in Step.~2, and up to subsequences, we can also assume that  $\{F_k\}$ converges uniformly on compacta of $\B^N$ to an automorphism $\Phi$ of $\B^N$.
		
		Since $F_k(A_{a_k}^{-1}(\gamma_j((0+\infty))))=A_{b_k}^{-1}(\eta(0,+\infty))$ for $j=1,2$ and for all $k$, it follows that $\Phi(\sigma_j((-\infty,+\infty)))=\tau((-\infty,+\infty))$, $j=1,2$. 
		
		Let $\zeta_k^1:=A_{a_k}^{-1}(\gamma_1(s_k))$ and let $x_0\in (-1,1)e_1$ be the limit of $\{\zeta_k^1\}$. Let $\zeta_k^2\in A_{a_k}^{-1}(\gamma_2([0,+\infty)))$ be such that 
		$F_k(\zeta_k^2)=F_k(\zeta_k^1)$ for all $k$ (it exists since $F_0(\gamma_1([0,+\infty))=F_0(\gamma_2([0,+\infty))$). 
		
		We claim that $\{\zeta_k^2\}$ converges to $x_0$. To this aim, it is enough to show that $\{\zeta_k^2\}$ is relatively compact in $\B^N$, because, if so and, up to subsequences, $x_1\in\B^N$ is the limit of $\{\zeta_k^2\}$ then 
		\[
		\Phi(x_1)=\lim_{k\to +\infty}F_k(\zeta^2_k)=\lim_{k\to +\infty}F_k(\zeta^1_k)=\Phi(x_0), 
		\]
		and, since $\Phi$ is an automorphism, $x_0=x_1$.
		
		In order to show that $\{\zeta_k^2\}$ is relatively compact in $\B^N$, since $\{A_{a_k}^{-1}\circ \gamma_2\}$ converges, up to reparametrization, to $\sigma_2$ whose image is equal to that of $\sigma_1$, there exists a sequence $\{\xi_k\}$  converging to $x_0$ and such that $\xi_k\in A_{a_k}^{-1}(\gamma_2([0,+\infty))$ for all $k$. Thus,
		\begin{equation*}
		K_{D_k}(\xi_k, \zeta_k^2)=K_{G_k}(F_k(\xi_k), F_k(\zeta_k^2))
		=K_{G_k}(F_k(\xi_k)), F_k(\zeta_k^1)).
		\end{equation*}
		Since $\{\xi_k\}$ and $\{\zeta_k^1\}$ converge to $x_0$, it follows by Lemma~\ref{Lem:metric-conv} that the last quantity is bounded. Hence, $\{\xi_k\}$ is relatively compact in $\B^N$ and hence it converges to $x_0$.
		
		Since $\gamma_1(s_k)\neq \gamma_2(s_k)$, we see that $\zeta_k^1\neq \zeta_k^2$ for all $k$. 
		
		Now, $\{F_k\}$ converges uniformly on compact to $\Phi$, which is injective. Hence, for $\epsilon>0$ so that $B(x_0,\epsilon):=\{z\in \C^N: \|z-x_0\|<\epsilon\}\subset\B^N$, there exists $k_0$ such that $F_k$ is injective on $B(x_0,\epsilon)$ for $k\geq k_0$. However, for $k$ large, $\zeta_k^1, \zeta_k^2\in B(x_0,\epsilon)$, $\zeta_k^1\neq \zeta_k^2$ and $F_k(\zeta_k^1)=F_k(\zeta_k^2)$, a contradiction.		
	\end{proof}
	
	\section{Special cases: holomorphic coverings and Reinhardt domains}\label{section:special-cases}
	
	It is natural to explore the possibility of relaxing assumptions on the mapping $F$ (and the domains $D$ and $G$) in results like those in Theorem~\ref{main-intro-inner} and Theorem~\ref{main-intro}, while preserving the same conclusion. However, as we will demonstrate, such assumptions cannot be omitted in general. Below, we provide examples of non-trivial proper holomorphic mappings, which are also holomorphic coverings, between Reinhardt domains (that can be chosen to be strongly pseudoconvex). These mappings do not constitute biholomorphisms, despite being Kobayashi isometries along a complete family of geodesic lines.
		
	First, we present a general result on the behaviour of geodesics under holomorphic coverings.
	
	\begin{proposition}\label{proposition:lifting}
		Lef $\pi:D\to G$ be a holomorphic covering between complete hyperbolic domains in $\mathbb C^n$, $w,z\in G$. Then $\gamma$ is a geodesic segment in $G$ joining $w$ and $z$ if and only if there exist points $\tilde w, \tilde z\in D$ with $\pi(\tilde w)=w$,  $\pi(\tilde z)=z$, and a geodesic segment $\tilde\gamma$ in $D$  such that $\pi\circ \tilde\gamma=\gamma$ and $K_D(w,z)=K_G(\tilde w,\tilde z)$.
		
		In particular, given $w_0\in G$ and $\tilde w_0\in D$ such that $\pi(\tilde w_0)=w_0$,  any geodesic  passing through $w_0\in G$  can be lifted to a geodesic of $D$ passing through $\tilde w_0$.
			\end{proposition}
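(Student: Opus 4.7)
The strategy rests on two standard properties of a holomorphic covering $\pi:D\to G$: that $\pi$ is a local isometry for the infinitesimal Kobayashi pseudometric, meaning $k_D(\tilde z;\tilde v)=k_G(\pi(\tilde z); d\pi_{\tilde z}(\tilde v))$ for every $\tilde z\in D$ and $\tilde v\in\C^n$, which gives the length identity $\ell_D(\tilde\gamma;[s,t])=\ell_G(\pi\circ\tilde\gamma;[s,t])$ for any absolutely continuous curve $\tilde\gamma$; and that absolutely continuous paths in $G$ lift uniquely to $D$ once a basepoint in the fiber is chosen. I will also use the trivial fact that $\pi$ is distance-decreasing: $K_G(\pi(\tilde a),\pi(\tilde b))\leq K_D(\tilde a,\tilde b)$.

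For the ``if'' direction, assume $\tilde\gamma:[a,b]\to D$ is a geodesic segment with $\pi\circ\tilde\gamma=\gamma$, endpoints $\tilde w,\tilde z$, and $K_D(\tilde w,\tilde z)=K_G(w,z)$. The plan is to write the chain
\[
K_G(w,z)\leq \ell_G(\gamma)=\ell_D(\tilde\gamma)=K_D(\tilde w,\tilde z)=K_G(w,z),
\]
which forces $\ell_G(\gamma)=K_G(w,z)$ and, applied to every subinterval via the triangle inequality, identifies $\gamma$ as a geodesic segment in $G$.

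For the ``only if'' direction, I would pick any $\tilde w\in\pi^{-1}(w)$, use the path-lifting property of the covering to lift $\gamma$ to an absolutely continuous curve $\tilde\gamma:[a,b]\to D$ with $\tilde\gamma(a)=\tilde w$, and set $\tilde z:=\tilde\gamma(b)\in\pi^{-1}(z)$. Then the chain
\[
K_G(w,z)\leq K_D(\tilde w,\tilde z)\leq \ell_D(\tilde\gamma)=\ell_G(\gamma)=K_G(w,z)
\]
must be a chain of equalities, so $K_D(\tilde w,\tilde z)=K_G(w,z)$ and $\tilde\gamma$ is length-minimizing in $D$, hence a geodesic segment. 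For the final ``in particular'' assertion, I would apply this procedure to each geodesic subsegment of the given geodesic whose domain contains the parameter value sent to $w_0$, starting each lift at the prescribed point $\tilde w_0$, and patch the resulting lifts together using uniqueness of path lifts to obtain a globally defined geodesic lift through $\tilde w_0$.

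The single nontrivial ingredient is the fact that a holomorphic covering is a local isometry of infinitesimal Kobayashi pseudometrics. This is classical and follows from applying the contraction property for holomorphic maps to both $\pi$ itself and to a local holomorphic right-inverse (section) of $\pi$, which exists near every point of $D$ since $\pi$ is a covering and hence a local biholomorphism. Once this identity is granted, the rest of the argument is the two short inequality chains above.
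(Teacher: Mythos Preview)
Your proposal is correct and follows essentially the same approach as the paper: both arguments rest on the fact that a holomorphic covering preserves the infinitesimal Kobayashi metric, then use path-lifting together with the inequality chain $K_G(w,z)\le K_D(\tilde w,\tilde z)\le \ell_D(\tilde\gamma)=\ell_G(\gamma)=K_G(w,z)$ to force equalities. Your write-up is in fact more explicit than the paper's, which only displays the chain for one direction and then declares the equivalence ``straightforward''; your separate treatment of the ``if'' direction and of the ``in particular'' clause adds detail but no new ideas.
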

	\begin{proof} Recall that the holomorphic coverings preserve the Kobayashi metric.
		Denote by $\gamma:[t_0,t_1]\to G$ a geodesic segment joining $w$ and $z$ and denote by $\tilde \gamma$ its lifting to $D$. Let $\tilde \gamma(t_0)=\tilde w$, $\tilde \gamma(t_1)=\tilde z$. Then we have the following inequalities
		\begin{equation*}
		K_G(w,z)\leq K_D(\tilde w,\tilde z)\leq \int_{t_0}^{t_1}k_D(\tilde\gamma(t);\tilde\gamma^{\prime}(t))dt=\int_{t_0}^{t_1}k_G(\gamma(t);\gamma^{\prime}(t))dt,
		\end{equation*}
		which straightforwardly establishes the desired equivalence.			\end{proof}
	
	\begin{remark} Let us consider the following holomorphic covering
		\begin{equation*}
		\{\lambda\in\mathbb C:-\log R<\re(\lambda)<\log R\}=:H_R\owns\lambda\mapsto \exp (\lambda)\in A_R:=\{\lambda \in \mathbb C:1/R<|\lambda|<R\}. 
		\end{equation*}
		Note that the geodesic lines in $H_R$ given by $\R\ni s\mapsto t_0+is$ for 
		$-\log R<t_0<\log R$,   a complete collection of geodesic lines; they are mapped isometrically onto geodesic lines $\{t\exp(is):1/R<t<R\}$, $s\in\mathbb R$ though the mapping itself is not even proper.
	\end{remark}
	
	\begin{remark} More generally, in view of the results above we see that under the assumption that the mapping $F:D\mapsto G$ is a holomorphic covering and $G$ admits a complete collection $\mathcal F$ of geodesic lines then we may lift the collection $\mathcal F$ to a complete family $\widetilde{\mathcal F}$ of geodesic lines so that $F$ maps isometrically elements of $\widetilde{\mathcal F}$ preserving the Kobayashi distance along them. Consequently, when the mapping is not biholomorphic we get a kind of counterexample to the desired biholomorphcity of the mapping $F$. We shall make it very concrete in the next subsection.
	\end{remark}
	
	\subsection{Examples in Reinhardt domains}
	In this part we are dealing with Reinhardt pseudoconvex domains $D\subset\mathbb C_*^n$, $n\geq 1$. In such a situation the domain
	\begin{equation*}
	\log D:=\{\log |z|:=(\log|z_1|,\ldots,\log|z_n|):z\in D\}
	\end{equation*}
	is convex. For simplicity we impose a stronger condition on $D$, namely, we assume that the following is satisfied: $\log D$ is bounded, which means
	that $D$ is additionally bounded and 'separated' from the axes.
	
	Recall that the mapping
	\begin{equation*}
	T_{\log D}\owns u\mapsto\exp u:=(\exp(u_1),\ldots,\exp(u_n))\in D,
	\end{equation*}
where $T_{\log D}:=\log D+i\mathbb R^n$ is a {\it tube domain with the basis $\log D$},
	is a holomorphic covering. A formula describing the behaviour of the Kobayashi distance under holomorphic coverings gives the following
	\begin{equation*}
	K_D(w,z)=\inf\{K_{T_{\log D}}(u,v+i2\pi\nu): \nu\in\mathbb Z^n\},
	\end{equation*}
	where $\exp(u)=w$, $\exp(v)=z$. Note that in our situation the infimum is always attained.
	
	This leads us to the next example.
	
	\begin{example}
		The rays $(0,1)\owns t\mapsto t\omega\in \mathbb D_*$, $|\omega|=1$ are geodesic lines such that we have the equality
		$K_{\mathbb D_*}(t^n\omega^n,s^n\omega^n)=K_{\mathbb D_*}(t\omega,s\omega)$, $0<t,s<1$ showing that the mapping $\mathbb D_*\owns\lambda\mapsto \lambda^n\in\mathbb D_*$ maps isometrically a complete collection of geodesic lines and is not a biholomorphism (if $n\geq 2$).
		
		The analogous result may be proven for the annuli $A(0,r,1)$ and $A(0,r^n,1)$ -- we shall give the details below in a more general setting (in arbitrary dimension).
	\end{example} 
	
	One may also easily see that in the case $x,y\in \log D$ we have the following equality
	\begin{equation}
	K_D(\exp(x),\exp(y))=K_{_{T_{\log D}}}(x,y).
	\end{equation}
	In fact to see the last property it is sufficient to see that any competing analytic disc $f:\mathbb D\to T_{\log D}$ for $x$ and $y+i2\pi\nu$ for some $\nu\in\mathbb Z^n$, {\sl i.e.}, the one with $f(0)=x$, $f(t)=y+i2\pi\nu$, $t>0$ gives rise to a competitor $g(\lambda):=(f(\lambda)+\overline{f(\overline{\lambda})})/2$ with $g(0)=x$, $g(t)=y$, from which the formula follows at once.
	
	Adopting the definition from \cite{Zwo 2021} we call two points $x,y\in\partial \log D$ {\it antipodal} if there is a hyperplane $H$ such that $x+H$ and $y+H$ are not identical and they are supporting hyperplanes of $\log D$. Let us make some comments. First note that the segment $(-1,1)$ is a geodesic line (up to a parametrization) in the strip $H:=\{\lambda\in\mathbb C:-1<\re \lambda<1\}$. Consequently, the open segment joining antipodal points $x,y\in\partial\log D$ is a geodesic line in $T_{\log D}$ and thus by the fact that the map defined by the formula (13) in \cite{Zwo 2021} is a complex geodesic we easily conclude that (up to a parametrization) the curve
	\begin{equation*}
	(-1,1)\owns t\mapsto\exp((x+y)/2)\exp(t(x-y)/2)
	\end{equation*}
	is a geodesic line in $D$.
	
	Now we recall the definition of the monomial mapping $\Phi_A$ (see \cite{Zwo 1999}, \cite{Zwo 2000}) where $A$ is $n\times n$ matrix with integer elements. First for $\alpha\in\mathbb C^n$ we define
	\begin{equation}
	z^{\alpha}:=z_1^{\alpha_1}\cdot\ldots\cdot z_n^{\alpha_n}
	\end{equation}
	for all $z$ with $z_j\neq 0$ if $\alpha_j<0$ (this is always satisfied in our situation).  Then we define
	\begin{equation}
	\Phi_A(z):=(\Phi_{A^1}(z),\ldots,\Phi_{A^n}(z)),
	\end{equation}
where $A^j$ denotes the $j$-th row of $A$.
It is known that $\Phi_A$ is a proper holomorphic mapping of $\mathbb C_*^n$ if and only if $\det A\neq 0$; additionally, its multiplicity equals $|\det A|$ (see e.g. Theorem 2.1 in \cite{Zwo 1999}).
	
Below we consider a non-singular $A$. Note that $\log(\Phi_A(D))=A(\log D)$, so $\Phi_A$ when restricted to $D$ is a proper holomorphic mapping onto the pseudoconvex Reinhardt domain $\Phi_A(D)$. It is also a holomorphic covering.
	
Recall that Proposition~\ref{proposition:lifting} lets us to lift any geodesic segment/ray/line in $\Phi_A(D)$ to a geodesic segment/ray/line in $D$ such that $\Phi$,  restricted to it, is a Kobayashi isometry. 
	
To make the next example concrete let $\log D$ be the unit Euclidean ball and $A=2\mathbb I_n$. We then construct a holomorphic map $\exp(T_{\mathbb B_n^{\mathbb R}})\owns z\mapsto (z_1^2,\ldots,z_n^2)\in \exp(T_{2\mathbb B_n^{\mathbb R}})$
that maps isometrically a complete collection of geodesic lines in $D$ that is obtained by coordinate-wise rotations of geodesics lying in $\mathbb B_n^{\mathbb R}$ and is proper, with multiplicity equal to $2^n$.

Note that $A$ maps antipodal points of $\log D$ to antipodal points in $A(\log D)$. Moreover, we get the following equality for geodesic lines $\gamma$ joining the antipodal points (such that the graph is the segment)
	\begin{equation}
	K_D(\gamma(t),\gamma(s))=K_{\Phi_A(D)}(\Phi_A(\gamma(t)),\Phi_A(\gamma(s))),
	\end{equation}
	which easily shows that the above construction may be easily transferred to Reinhardt domains with  $\log D$ satisfying the following property:	For any $u\in \log D$ there are two antipodal points such that $u$ lies on the segment joining $x$ and $y$.

\end{document}